\newtheorem{theorem}{Theorem}
\theoremstyle{plain}
\newtheorem{definition}{Definition}
\newtheorem{lemma}{Lemma}
\newtheorem{notation}{Notation}
\newtheorem{remark}{Remark}
\numberwithin{equation}{section}
\begin{document}
\title[homogenization and bioheat equation]{ON A MULTISCALE ANALYSIS OF A
MICRO-MODEL OF HEAT TRANSFER IN BIOLOGICAL TISSUES}
\author{Abdelhamid AINOUZ}
\address{Laboratory AMNEDP, Faculty of Mathematics, University of sciences
and technology, Po Box 32 El Alia, Algiers, Algeria.}
\email{aainouz@usthb.dz}
\thanks{The paper is a part of the project N$%
{{}^\circ}%
$ B00002278 of the MESRS algerian office. The author is indebted for their
financial support.}
\date{}
\subjclass[2000]{Primary 35B27}
\keywords{Homogenization, bioheat transfer, thermoregulation,
thermotherapy}

\begin{abstract}
A bio-heat transfer model for biological tissues in a micro-scale and
periodical settings is investigated . It is assumed that the model is a
two-component system consisting of solid particles representing tissue cells
and interconnected pores containing either arterial or venous blood. This
tissue-blood system is described by two energy equations, one equation for
the solid tissues and the other for the surrounding blood. On the interface
between them, it is assumed that the heat transfer is governed by Newton's
cooling law. Using homogenization techniques, it is shown that the obtained
macro-model presents some extra-terms and it can be seen as a new
mathematical model for human thermotherapy and human thermoregulation system.
\end{abstract}

\maketitle

\section{Introduction\label{s1}}

Studying heat transfer in biological tissues is important  in many
biomedical engineering such as thermoregulation system, thermotherapy and
radiotherapy, skin surgery etc... See for instance S.A. Berger \& \textit{al.%
} \cite{bgl}, J.C. Chato \cite{cha}, M. Gautherie \cite{gau}, K. Khanafer \&
\textit{al. }\cite{kapb}, M. Miyikawa and J.C. Bolomey (eds) \cite{mik} and
the references therein. Many mathematical models were proposed to predict
the distribution of the temperaure in biological tissues. One of the most
widely used model is the bioheat equation after the pioneering work of H.H.
Pennes \cite{pen}. It is based on the well-known Fourier law with the
concept of blood perfusion. It reads as follows
\begin{equation}
\rho c\partial _{t}T-\text{\textrm{div}}\left( \kappa \nabla T\right)
+\omega _{b}\rho _{b}c_{b}\rho \left( T-T_{a}\right) =f  \label{bm1}
\end{equation}%
where $T,\rho ,c$ and $\kappa $ are respectively the temperature, the
density, the specific heat and the heat conductivity coefficient of the
tissue, $\omega _{b}$ is the blood perfusion, $\rho _{b}$ and $c_{b}$ are
the density and the specific heat of the blood and $T_{a}$ is the
temperature of the arterial flow. Finally $f$ is some external source of
heating and it is generally written as the sum of sources due to absorbed
laser light and metabolic activity. Many scientists have attempted to
justify the Helmoltz term of (\ref{bm1}). We mention for example M.M. Chen,
K.R. Holmes\cite{ch}, P. Wust \& \textit{al.} \cite{wnfp}, R. Hochmuth and
P. Deuflhard\cite{hd}. In the latter, a homogenization technique was
developped on a microvascular model consisting of tissues (solid) surrounded
by blood (fluid). More precisely they study the following system%
\begin{eqnarray}
-\Delta T^{\varepsilon } &=&S^{\varepsilon }\text{ in }\Omega ^{\varepsilon }%
\text{,}  \label{bm2} \\
T^{\varepsilon } &=&0\text{ on }\partial \Omega ,  \label{bm3} \\
\frac{\partial T^{\varepsilon }}{\partial n^{\varepsilon }} &=&\varepsilon
\alpha \left( T_{b}^{\varepsilon }-T^{\varepsilon }\right) \text{ on }%
\partial Q^{\varepsilon }  \label{bm4}
\end{eqnarray}%
where $T^{\varepsilon }$ is the temperature and $S^{\varepsilon }$ the
source term in the tissues represented by $\Omega ^{\varepsilon }$ whereas $%
T_{b}^{\varepsilon }$ is the temperature of the blood. The domain $\Omega
^{\varepsilon }$ is obtained by removing from $\Omega $, a bounded and
regular domain, a set of holes $Q^{\varepsilon }$ where blood flows. In (\ref%
{bm4}), $n^{\varepsilon }$ is the unit normal of $\partial Q^{\varepsilon }$
outward to $\Omega ^{\varepsilon }$ and $\alpha >0$ a physiological
parameter. They obtained the following homogenized model:
\begin{equation*}
-\text{\textrm{div}}\left( \mathcal{A}\nabla T^{\ast }\right) +\alpha ^{\ast
}\left( T^{\ast }-T_{b}\right) =S
\end{equation*}%
where $\mathcal{A}$ is the homogenized tensor, $T^{\ast }$ is the weak limit
in $H^{1}\left( \Omega \right) $ of some extended temperature $%
P^{\varepsilon }\left( T^{\varepsilon }\right) $, $S^{\varepsilon }$ is the
weak limit in $L^{2}\left( \Omega \right) $ of the source term $%
S^{\varepsilon }$ and finally $\alpha ^{\ast }$ is the effective Helmoltz
term. In fact, biological tissues can be seen as porous media where cells
(matrix) are separated by voids or pores which are filled with blood. This
system of cells-blood can be interpreted as a two-constituent medium. In
connection with binary composites presenting thermal barriers at the
interfacial contact, we mention especially the work by J.L.\ Auriault and H.
Ene\cite{ae} where they study heat transfer in a two-component composite
with conductivities of the same order of magnitude. The macroscopic model is
shown to belong to two main types of field models: one-temperature and
two-temperature, depending on the order of magnitude of the interfacial
thermal conductance. In the present paper, we shall be concerned with a
micro-model for the heat transfer in a biological tissue\ made of two
interacting systems ( cells tissues and blood regions) where the
conductivities are assumed to be of different order of magnitude. We also
assume that the transition between these two regions on the interface is
governed by Newton's cooling law. That is the heat flow through the
interface is proportional, by the thermal conductance of the layer, to the
jump of the temperature field, see R. Hochmuth and P. Deuflhard\cite{hd}
(see also H.S. Carslaw and J.C. Jaeger\cite{cj}). We mention that this kind
of boundary transmission condition was used for the homogenization in porous
media, see for e.g. A. Ainouz\cite{ain}$^{,}$\cite{ain1}$^{,}$\cite{ain2}.
In fact, there are many works showing how transport theories in porous media
enhance the understanding of flow and heat transfer in biological tissues.
For more details, we refer the reader to the survey paper by A.-R.A. Khaled
and K. Vafai\cite{kv}.

The paper is organized as follows: in Section 2, the geometry of the domain
and the micro-model are set. In Section 3 \ a formal expansion technique is
used to derive the homogenized model. Finally in Section 4, the two-scale
convergence technique is applied to justify the formal procedure of Section
3.

\section{Setting of the Problem\label{s2}}

We start by introducing the notation used throughout this paper. We consider
$\Omega $\ a bounded domain in $\mathbb{R}^{d}\ $($d\geq 2$) with smooth
boundary $\partial \Omega $. Let $Y\overset{\mathrm{def}}{=}\ ]0,1[^{d}$ be
the generic cell of periodicity divided as $Y=Y_{1}\cup Y_{2}\cup \Sigma $
where $Y_{1},$\ $Y_{2}$\ are two connected, open disjoint subsets of $Y$ and
$\Sigma \overset{\mathrm{def}}{=}\ \partial Y_{1}\cap \partial Y_{2}$\ is a
smooth $\left( d-1\right) $-dimensional manifold. As in G. Allaire and F.
Murat\cite{am}, we assume that the $Y-$periodic continuation of $Y_{1}$\ to
the whole space $\mathbb{R}^{d}$, namely $\widetilde{Y_{1}}=\cup _{k\in
\mathbb{Z}^{d}}\left( k+Y_{1}\right) $\ is smooth and connected. Note that
no connectedness assumption is made on the part $\cup _{k\in \mathbb{Z}%
^{d}}\left( k+Y_{2}\right) $.

Let $\chi _{1}$ (resp. $\chi _{2}$) denote the $Y$-periodic characteristic
function of $Y_{1}$ (resp. $Y_{2}$). Denoting $\varepsilon >0$ a
sufficiently small parameter, we set%
\begin{equation*}
\Omega _{1}^{\varepsilon }\ \overset{\mathrm{def}}{=}\ \{x\in \Omega :\chi
_{1}(\frac{x}{\varepsilon })=1\},\ \Omega _{2}^{\varepsilon }\ \overset{%
\mathrm{def}}{=}\ \{x\in \Omega :\chi _{2}(\frac{x}{\varepsilon })=1\},
\end{equation*}%
and let $\Sigma ^{\varepsilon }\ \overset{\mathrm{def}}{=}\ \overline{\Omega
_{1}^{\varepsilon }}\cap \overline{\Omega _{2}^{\varepsilon }}$. Without
loss of generality, we assume that the region $\Omega _{2}^{\varepsilon }$\
is strictly embedded in the region $\Omega _{1}^{\varepsilon }$, in the
sense that $\overline{\Omega _{2}^{\varepsilon }}\subset \Omega $. In this
connection, $\Omega _{1}^{\varepsilon }$ is referred as the cellular domain
and $\Omega _{2}^{\varepsilon }$ as the voids filled with blood. We see that
the boundary of $\Omega _{2}^{\varepsilon }$ is the interface $\Sigma
^{\varepsilon }$ and the boundary of $\Omega _{1}^{\varepsilon }$ consists
then of two parts: $\Sigma ^{\varepsilon }$ and the exterior boundary $%
\Gamma $. We can write that
\begin{equation*}
\partial \Omega _{2}^{\varepsilon }=\Sigma ^{\varepsilon }\text{ and }%
\partial \Omega _{1}^{\varepsilon }=\partial \Omega \cup \Sigma
^{\varepsilon }\text{.}
\end{equation*}%
Thanks to the connectedness of $\widetilde{Y_{1}}$, we see that $\Omega
_{1}^{\varepsilon }$ is connected while $\Omega _{2}^{\varepsilon }$ may or
may not be connected.

Let us denote $\rho ,c$ and $\kappa $ the density, the specific heat and the
heat conductivity coefficient of the tissue, respectively. Let $\omega _{b}$
denote the blood perfusion, $\rho _{b}$ and $c_{b}$ the density and the
specific heat of the blood. Let $\kappa _{b}$ the heat conductivity
coefficient of the blood. We shall assume that the phenomenological
parameters: $\rho ,c,\kappa ,\omega _{b},\rho _{b},c_{b}$ and $\kappa _{b}$
are positive constant and independent of $\varepsilon $.

Let $\left( 0,T\right) $ be the time interval. Put
\begin{eqnarray*}
&&Q\ \overset{\mathrm{def}}{=}\ \left( 0,T\right) \times \Omega ,\ \ \Gamma
\ \overset{\mathrm{def}}{=}\left( 0,T\right) \times \partial \Omega ,\ \ S\
\overset{\mathrm{def}}{=}\ \left( 0,T\right) \times \Sigma ,\ \  \\
&&Q_{1}^{\varepsilon }\overset{\mathrm{def}}{=}\ \left( 0,T\right) \times
\Omega _{1}^{\varepsilon },\ Q_{2}^{\varepsilon }\overset{\mathrm{def}}{=}\
\left( 0,T\right) \times \Omega _{2}^{\varepsilon },\ S^{\varepsilon }\
\overset{\mathrm{def}}{=}\ \left( 0,T\right) \times \Sigma ^{\varepsilon }
\end{eqnarray*}%
Let also
\begin{equation}
\alpha =\frac{\kappa }{\rho c},\ \alpha _{b}=\frac{\kappa }{\rho _{b}c_{b}}%
,\ \alpha _{b}^{\varepsilon }=\varepsilon ^{2}\alpha _{b},\ \gamma =\omega
_{b}\rho _{b}\frac{c_{b}}{c}.  \label{a1}
\end{equation}

The micro-model that we shall study in this paper is as follows:
\begin{subequations}
\begin{gather}
\partial _{t}T^{\varepsilon }-\alpha \Delta T^{\varepsilon }=F\text{\ \ in\ }%
Q_{1}^{\varepsilon },  \label{dp4p1} \\
\partial _{t}T_{b}^{\varepsilon }-\alpha _{b}^{\varepsilon }\Delta
T_{b}^{\varepsilon }=F_{b}\text{\ \ \ in }Q_{2}^{\varepsilon },
\label{dp4p2} \\
\alpha \nabla T^{\varepsilon }\cdot \nu ^{\varepsilon }=\alpha
_{b}^{\varepsilon }\nabla T_{b}^{\varepsilon }\cdot \nu ^{\varepsilon }\text{%
\ \ \ on }S^{\varepsilon },  \label{dp4p3} \\
\alpha \nabla T^{\varepsilon }\cdot \nu ^{\varepsilon }=-\varepsilon \gamma
\left( T^{\varepsilon }-T_{b}^{\varepsilon }\right) \text{\ \ \ on }%
S^{\varepsilon },  \label{dp4p4} \\
T^{\varepsilon }=0\text{\ \ \ on }\Gamma ,  \label{dp4p5} \\
T^{\varepsilon }\left( 0,\cdot \right) =h\left( \cdot \right) \text{\ \ \ in
}\Omega _{1}^{\varepsilon },  \label{dp4p6} \\
T_{b}^{\varepsilon }\left( 0,\cdot \right) =h_{b}\left( \cdot \right) \text{%
\ \ \ in }\Omega _{2}^{\varepsilon }  \label{dp4p7}
\end{gather}%
where $f$ (resp. $f_{b}$) be some external source of heating in cells (resp.
blood), $\nu ^{\varepsilon }$ stands for the unit normal of $\Sigma
^{\varepsilon }$ outward to $\Omega _{1}^{\varepsilon }$ and $h,h_{b}$ are
the initial temperature field in $\Omega _{1}^{\varepsilon }$, $\Omega
_{2}^{\varepsilon }$ respectively. Without no loss of generality, we shall
assume
\end{subequations}
\begin{equation}
f,\ f_{b},\ h,\ h_{b}\in L^{2}\left( \Omega \right) .  \label{dp4h7}
\end{equation}%
Our system actually models heat flow in a porous medium (biological tissue).
In this connection, $\Omega _{1}^{\varepsilon }$ represents the matrix-cells
space region and $\Omega _{2}^{\varepsilon }$ the pores which are filled
with blood. The thin layer $\Sigma ^{\varepsilon }$ is an interfacial flow
barrier with heat conductance given by $\gamma ^{\varepsilon }=\varepsilon
\omega _{b}\rho _{b}c_{b}\rho $. The unknowns $T^{\varepsilon }$ and $%
T_{b}^{\varepsilon }$ are the temperatures in $Q_{1}^{\varepsilon }$ and $%
Q_{2}^{\varepsilon }$ respectively. The first equation describes the heat
flow in the cells with large conductivity and the second describes the heat
flow in the blood region with low conductivity. Condition (\ref{dp4p3})
expresses flux continuity across the interface. However, the temperature may
present in general jumps across $\Sigma ^{\varepsilon }$. Here, we have
employed Newton's cooling law described by (\ref{dp4p4}), see for instance
R. Hochmuth and P. Deuflhard\cite{hd} and H.S. Carslaw and J.C. Jaeger\cite%
{cj}. We can say that $\Omega _{1}^{\varepsilon }$ can be considered as a
good conductor, while $\Omega _{2}^{\varepsilon }$ a poor one. The interface
$\Sigma ^{\varepsilon }$ can be seen as a heat exchanger. The condition (\ref%
{dp4p5}) is the standard homogeneous Dirichlet condition on the exterior
boundary. Finally equations (\ref{dp4p6})-(\ref{dp4p7}) are the initial
conditions, which close the system under consideration. Note that in (\ref%
{a1}), the matrix $A_{b}^{\varepsilon }$ is scaled by $\varepsilon ^{2}$ to
provide the correct scaling for the heat flow in the block regions. Indeed,
this scaling is the unique choice that makes every term of the porous medium
equation in the block cell reappears in the leading order asymptotic
expansion, so that the form of the equation is preserved on the small scale
independently of\textbf{\ }$\varepsilon $.

Let us now establish a variational framework of our problem. To this end, we
first introduce some notations. Let
\begin{equation*}
H^{\varepsilon }=L^{2}\left( \Omega _{1}^{\varepsilon }\right) \times
L^{2}\left( \Omega _{2}^{\varepsilon }\right) \text{ and}\ V^{\varepsilon
}=\left( H^{1}\left( \Omega _{1}^{\varepsilon }\right) \cap H_{0}^{1}\left(
\Omega \right) \right) \times H^{1}\left( \Omega _{2}^{\varepsilon }\right) .
\end{equation*}%
We shall consider on $H^{\varepsilon }$ and $V^{\varepsilon }$ the following
inner products:
\begin{eqnarray*}
\left( \varphi ,\psi \right) _{H^{\varepsilon }} &=&\int_{\Omega
_{1}^{\varepsilon }}\varphi _{1}\psi _{1}\hspace{0.01in}\mathrm{d}%
x+\int_{\Omega _{2}^{\varepsilon }}\varphi _{2}\psi _{2}\hspace{0.01in}%
\mathrm{d}x,\ \ \varphi =\left( \varphi _{1},\varphi _{2}\right) ,\ \ \psi
=\left( \psi _{1},\psi _{2}\right) , \\
&&\  \\
\left( \varphi ,\psi \right) _{V^{\varepsilon }} &=&\int_{\Omega
_{1}^{\varepsilon }}\nabla \varphi _{1}\nabla \psi _{1}\hspace{0.01in}%
\mathrm{d}x+\varepsilon ^{2}\int_{\Omega _{2}^{\varepsilon }}\nabla \varphi
_{2}\nabla \psi _{2}\hspace{0.01in}\mathrm{d}x+ \\
&&\varepsilon \int_{\Sigma ^{\varepsilon }}\left( \varphi _{1}-\varphi
_{2}\right) \left( \psi _{1}-\psi _{2}\right) \hspace{0.01in}\mathrm{d}%
\sigma ^{\varepsilon }
\end{eqnarray*}%
where $\mathrm{d}x$ denotes the Lebesgue measure on $\mathbb{R}^{d}$ and $%
\mathrm{d}\sigma ^{\varepsilon }$ the surfacic measure on $\Sigma
^{\varepsilon }$. The norms induced in $H^{\varepsilon }$ and $%
V^{\varepsilon }$ are denoted by $\left\Vert \cdot \right\Vert
_{H^{\varepsilon }}$ and $\left\Vert \cdot \right\Vert _{V^{\varepsilon }}$,
respectively. Clearly, $H^{\varepsilon }$ and $V^{\varepsilon }$\ are
Hilbert spaces when equipped with their respective norms. Moreover, it can
easily be shown that $V^{\varepsilon }$\ is separable, dense and
continuously embedded in $H^{\varepsilon }$.

Let us introduce the bilinear form $a^{\varepsilon }\left( \cdot ,\cdot
\right) :V^{\varepsilon }\times V^{\varepsilon }\longrightarrow \mathbb{R}$
defined by
\begin{eqnarray*}
a^{\varepsilon }\left( \varphi ,\psi \right) &=&\int_{\Omega
_{1}^{\varepsilon }}\alpha \nabla \varphi _{1}\nabla \psi _{1}\hspace{0.01in}%
\mathrm{d}x+\int_{\Omega _{2}^{\varepsilon }}\alpha _{b}\nabla \varphi
_{2}\nabla \psi _{2}\hspace{0.01in}\mathrm{d}x+ \\
&&\int_{\Sigma ^{\varepsilon }}\gamma ^{\varepsilon }\left( \varphi
_{1}-\varphi _{2}\right) \left( \psi _{1}-\psi _{2}\right) \hspace{0.01in}%
\mathrm{d}\sigma ^{\varepsilon }
\end{eqnarray*}%
where $\varphi =\left( \varphi _{1},\varphi _{2}\right) ,\ \psi =\left( \psi
_{1},\psi _{2}\right) \in V^{\varepsilon }$. We see that $a^{\varepsilon
}\left( \cdot ,\cdot \right) $ is continuous and uniformly coercive.

Let $\left( V^{\varepsilon }\right) ^{\prime }$ denote the dual space of $%
V^{\varepsilon }$. Let $\mathcal{A}^{\varepsilon }\in \mathcal{L}\left(
V^{\varepsilon },\left( V^{\varepsilon }\right) ^{\prime }\right) $ be given
by
\begin{equation*}
\mathcal{A}^{\varepsilon }\left( \varphi \right) \psi =a^{\varepsilon
}\left( \varphi ,\psi \right) ,\ \ \ \ \ \ \ \varphi ,\psi \in
V^{\varepsilon }.
\end{equation*}%
For convenience we shall denote $w^{\varepsilon }=\left( T^{\varepsilon
},T_{b}^{\varepsilon }\right) $,\ \ \ $g=\left( h,h_{b}\right) \ $and $\ $%
\begin{equation*}
f^{\varepsilon }=f\chi _{1}^{\varepsilon }+f_{b}\chi _{2}^{\varepsilon },\ \
\ \chi _{m}^{\varepsilon }\left( x\right) =\chi _{m}\left( \dfrac{x}{%
\varepsilon }\right) ,m=1,2.
\end{equation*}%
Let
\begin{equation*}
W^{1,2}\left( 0,T;H^{\varepsilon }\right) =\left\{ w\in L^{2}\left(
0,T;H^{\varepsilon }\right) :w^{\prime }=\frac{dw}{dt}\in L^{2}\left(
0,T;H^{\varepsilon }\right) \right\} .
\end{equation*}%
The variational formulation for (\ref{dp4p1})-(\ref{dp4p7}) reads as
follows: find $w^{\varepsilon }\in L^{2}\left( 0,T;V^{\varepsilon }\right) $
such that for every $\varphi \in W^{1,2}\left( 0,T;H^{\varepsilon }\right)
\cap L^{2}\left( 0,T;V^{\varepsilon }\right) $ with $\varphi \left( T\right)
=0$, we have
\begin{eqnarray}
&&-\int_{0}^{T}\left( w^{\varepsilon }\left( t\right) ,\partial _{t}\varphi
\left( t\right) \right) _{H^{\varepsilon }}\hspace{0.01in}\mathrm{d}%
t+\int_{0}^{T}\mathcal{A}^{\varepsilon }\left( w^{\varepsilon }\left(
t\right) \right) \varphi \left( t\right) \hspace{0.01in}\mathrm{d}t  \notag
\\
&=&\int_{0}^{T}\left( f^{\varepsilon },\varphi \left( t\right) \right)
_{H^{\varepsilon }}\hspace{0.01in}\mathrm{d}t+\left( g,\varphi \left(
0\right) \right) _{H^{\varepsilon }}.  \label{dp4ie}
\end{eqnarray}%
where $\mathrm{d}t$ denotes the Lebesgue measure on $\left( 0,T\right) $.

Next, we state the existence and uniqueness result for (\ref{dp4ie}) the
proof of which is given in the next section.

\begin{theorem}
\label{dp4t1}Let $\varepsilon >0$ be a sufficiently small parameter. Then,
there exists a unique weak solution $w^{\varepsilon }\in L^{2}\left(
0,T;V^{\varepsilon }\right) $ of Problem (\ref{dp4ie}) and the following
energy estimate holds:%
\begin{equation}
\left\Vert w^{\varepsilon }\right\Vert _{L^{\infty }\left(
0,T;H^{\varepsilon }\right) }+\left\Vert w^{\varepsilon }\right\Vert
_{L^{2}\left( 0,T;V^{\varepsilon }\right) }\leq C.  \label{dp4ea}
\end{equation}
\end{theorem}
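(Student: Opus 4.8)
The plan is to read (\ref{dp4ie}) as the weak formulation of the abstract linear parabolic Cauchy problem
\begin{equation*}
\partial _{t}w^{\varepsilon }+\mathcal{A}^{\varepsilon }w^{\varepsilon }=f^{\varepsilon }\ \text{ in }L^{2}\bigl(0,T;(V^{\varepsilon })^{\prime }\bigr),\qquad w^{\varepsilon }(0)=g,
\end{equation*}
posed in the Gelfand triple $V^{\varepsilon }\hookrightarrow H^{\varepsilon }\hookrightarrow (V^{\varepsilon })^{\prime }$, the embeddings being continuous and dense as recorded above. Since $\varepsilon$ is fixed, $f^{\varepsilon }\in L^{2}(0,T;H^{\varepsilon })$ (it is even time-independent) and $g\in H^{\varepsilon }$, while $a^{\varepsilon }(\cdot ,\cdot )$ is continuous and coercive on $V^{\varepsilon }$; existence and uniqueness of $w^{\varepsilon }\in L^{2}(0,T;V^{\varepsilon })$ with $\partial _{t}w^{\varepsilon }\in L^{2}(0,T;(V^{\varepsilon })^{\prime })$ — hence, after modification on a null set, $w^{\varepsilon }\in C([0,T];H^{\varepsilon })$, so that the initial condition is meaningful — is then a consequence of the classical theory of parabolic variational equations (Lions). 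For completeness I would carry out the Faedo--Galerkin argument, which also produces the energy estimate (\ref{dp4ea}) with explicit constants.

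First I would fix a Hilbert basis $(e_{k})_{k\geq 1}$ of the separable space $V^{\varepsilon }$, orthonormalised in $H^{\varepsilon }$, and seek $w_{n}^{\varepsilon }(t)=\sum _{k=1}^{n}c_{k}^{n}(t)e_{k}$ solving the projected system $(\partial _{t}w_{n}^{\varepsilon },e_{k})_{H^{\varepsilon }}+a^{\varepsilon }(w_{n}^{\varepsilon },e_{k})=(f^{\varepsilon },e_{k})_{H^{\varepsilon }}$, $k=1,\dots ,n$, with $w_{n}^{\varepsilon }(0)$ the $H^{\varepsilon }$-orthogonal projection of $g$ onto $\mathrm{span}(e_{1},\dots ,e_{n})$. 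This is a linear constant-coefficient ODE system for $(c_{k}^{n})$, so the Cauchy--Lipschitz theorem gives a unique solution on all of $[0,T]$. Testing the projected system with $w_{n}^{\varepsilon }(t)$ yields
\begin{equation*}
\tfrac{1}{2}\tfrac{d}{dt}\Vert w_{n}^{\varepsilon }(t)\Vert _{H^{\varepsilon }}^{2}+a^{\varepsilon }\bigl(w_{n}^{\varepsilon }(t),w_{n}^{\varepsilon }(t)\bigr)=\bigl(f^{\varepsilon },w_{n}^{\varepsilon }(t)\bigr)_{H^{\varepsilon }}\leq \Vert f^{\varepsilon }\Vert _{H^{\varepsilon }}\,\Vert w_{n}^{\varepsilon }(t)\Vert _{H^{\varepsilon }}.
\end{equation*}
Invoking the uniform coercivity $a^{\varepsilon }(\varphi ,\varphi )\geq \lambda \Vert \varphi \Vert _{V^{\varepsilon }}^{2}$ with $\lambda >0$ independent of $\varepsilon$, the embedding $V^{\varepsilon }\hookrightarrow H^{\varepsilon }$, Young's inequality, $\Vert w_{n}^{\varepsilon }(0)\Vert _{H^{\varepsilon }}\leq \Vert g\Vert _{H^{\varepsilon }}$, and Gronwall's lemma, one obtains
\begin{equation*}
\sup _{0\leq t\leq T}\Vert w_{n}^{\varepsilon }(t)\Vert _{H^{\varepsilon }}^{2}+\int _{0}^{T}\Vert w_{n}^{\varepsilon }(t)\Vert _{V^{\varepsilon }}^{2}\,\mathrm{d}t\leq C\Bigl(\Vert g\Vert _{H^{\varepsilon }}^{2}+\Vert f^{\varepsilon }\Vert _{L^{2}(0,T;H^{\varepsilon })}^{2}\Bigr).
\end{equation*}
The point for the later homogenization is that $\Vert g\Vert _{H^{\varepsilon }}^{2}\leq \Vert h\Vert _{L^{2}(\Omega )}^{2}+\Vert h_{b}\Vert _{L^{2}(\Omega )}^{2}$ and, likewise, $\Vert f^{\varepsilon }\Vert _{H^{\varepsilon }}^{2}\leq \Vert f\Vert _{L^{2}(\Omega )}^{2}+\Vert f_{b}\Vert _{L^{2}(\Omega )}^{2}$, so the right-hand side is bounded independently of $\varepsilon$; this is exactly (\ref{dp4ea}).

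Finally I would pass to the limit. By the bounds just obtained, along a subsequence $w_{n}^{\varepsilon }\rightharpoonup w^{\varepsilon }$ weakly in $L^{2}(0,T;V^{\varepsilon })$ and weakly-$\ast $ in $L^{\infty }(0,T;H^{\varepsilon })$; testing the projected system against $\theta (t)e_{k}$ with $\theta \in C^{1}([0,T])$, $\theta (T)=0$, integrating by parts in time, and letting $n\to \infty$ (using $w_{n}^{\varepsilon }(0)\to g$ in $H^{\varepsilon }$ to produce the term $(g,\varphi (0))_{H^{\varepsilon }}$) shows that $w^{\varepsilon }$ satisfies (\ref{dp4ie}) for all test functions of that form, hence for all admissible $\varphi$ by density. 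Uniqueness is by linearity: the difference $z$ of two solutions solves the same problem with $f^{\varepsilon }=0$, $g=0$; one checks $z\in C([0,T];H^{\varepsilon })$ with $\partial _{t}z\in L^{2}(0,T;(V^{\varepsilon })^{\prime })$, tests with $z$, and concludes $z\equiv 0$ via coercivity and Gronwall. The estimate (\ref{dp4ea}) for $w^{\varepsilon }$ then passes from the uniform bound on $w_{n}^{\varepsilon }$ by weak (resp. weak-$\ast $) lower semicontinuity of the norms. The one genuinely delicate point is the uniform-in-$\varepsilon $ coercivity of $a^{\varepsilon }$ — equivalently, that $\Vert \cdot \Vert _{V^{\varepsilon }}$ controls $\Vert \cdot \Vert _{H^{\varepsilon }}$ with an $\varepsilon $-independent constant, which rests on a rescaled trace inequality on the reference cell $Y$ combined with a covering argument over the $\varepsilon $-cells; since the text has already recorded that $a^{\varepsilon }$ is uniformly coercive and $V^{\varepsilon }$ is a Hilbert space for that norm, I would simply quote those facts here.
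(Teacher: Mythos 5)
Your proposal is correct and rests on the same engine as the paper: both reduce (\ref{dp4ie}) to the classical Lions theorem for parabolic variational problems in the Gelfand triple $V^{\varepsilon }\hookrightarrow H^{\varepsilon }\hookrightarrow (V^{\varepsilon })^{\prime }$, with continuity and uniform coercivity of $a^{\varepsilon }$ taken as recorded; you additionally unwind the Faedo--Galerkin proof of that theorem, which the paper simply cites (Showalter, Prop.~2.3, Chap.~III). The one genuine difference is in how the $\varepsilon$-uniform energy estimate (\ref{dp4ea}) is closed. The paper tests with $w^{\varepsilon }$ and controls the source term through the bound $|L^{\varepsilon }(\varphi )|\leq C\Vert \varphi \Vert _{V^{\varepsilon }}$, whose proof is the real content of its argument: it requires the scaled Poincar\'e inequality on $\Omega _{2}^{\varepsilon }$ (Lemma \ref{dp4l2}) and the rescaled trace inequality on $\Sigma ^{\varepsilon }$ (Lemma \ref{dp4l3}) to show that $\Vert \varphi _{2}\Vert _{0,\Omega _{2}^{\varepsilon }}$ is controlled, uniformly in $\varepsilon $, by the $V^{\varepsilon }$-norm, i.e.\ estimates (\ref{dp4e8}) and (\ref{dp4e6}). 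You instead bound $(f^{\varepsilon },w_{n}^{\varepsilon })_{H^{\varepsilon }}\leq \Vert f^{\varepsilon }\Vert _{H^{\varepsilon }}\Vert w_{n}^{\varepsilon }\Vert _{H^{\varepsilon }}$ and absorb via Young plus Gronwall on the $H^{\varepsilon }$-norm; since $\Vert f^{\varepsilon }\Vert _{H^{\varepsilon }}$ and $\Vert g\Vert _{H^{\varepsilon }}$ are trivially bounded independently of $\varepsilon $, this route bypasses Lemmas \ref{dp4l1}--\ref{dp4l3} altogether for the estimate and is arguably more economical; what the paper's route buys is the uniform $V^{\varepsilon }$-to-$H^{\varepsilon }$ control itself, which is reused in the two-scale limit passage of Section 5. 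The only point you defer --- uniform coercivity of $a^{\varepsilon }$ with respect to $\Vert \cdot \Vert _{V^{\varepsilon }}$ --- is asserted without proof in the paper as well (it is immediate from the matching $\varepsilon ^{2}$ and $\varepsilon $ weights in the form and in the $V^{\varepsilon }$ inner product), so no gap is introduced.
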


Now, we are ready to give the main result of this paper whose proof will be
given in the last section.

We define the overall temperature in the biological tissue region $\Omega
_{1}^{\varepsilon }\cup \Omega _{2}^{\varepsilon }$ by\textbf{\ }%
\begin{equation*}
u^{\varepsilon }\left( t,x\right) =\chi _{1}\left( \frac{x}{\varepsilon }%
\right) T^{\varepsilon }\left( t,x\right) +\chi _{2}\left( \frac{x}{%
\varepsilon }\right) T_{b}^{\varepsilon }\left( t,x\right) ,\ \text{a.e. }%
\left( t,x\right) \in Q.
\end{equation*}

\begin{theorem}
\label{dp4t2}There exists a subsequence of $\left( u^{\varepsilon }\right) $%
, still denoted $\left( u^{\varepsilon }\right) $ such that there exist $%
T\in L^{2}\left( 0,T;H_{0}^{1}\left( \Omega \right) \right) $ and $T_{b}\in
L^{2}\left( Q;H_{\#}^{1}\left( Y\right) \right) $ with

\begin{enumerate}
\item $\chi _{1}\left( \frac{x}{\varepsilon }\right) u^{\varepsilon }$
weakly converges to $\left\vert Y_{1}\right\vert T$ in $L^{2}\left( Q\right)
;$

\item $\chi _{2}\left( \frac{x}{\varepsilon }\right) u^{\varepsilon }$
weakly converges to $\int_{Y_{2}}T_{b}\mathrm{d}y$ in $L^{2}\left( Q\right)
; $

\item $T$ is a solution to the homogenized problem:
\begin{gather}
\partial _{t}T-\int_{0}^{t}\mathcal{H}\left( t-\tau \right) T\left( \tau
\right) \hspace{0.01in}d\tau -\mathrm{div}\left( \widetilde{A}\nabla
T\right) +\widetilde{\gamma }T=\mathcal{F}\text{ in }Q,  \notag \\
\   \label{h0} \\
T=\text{ }0\text{ on }S,  \label{h1} \\
T\left( 0,x\right) =\left\vert Y_{1}\right\vert h\left( x\right) ,\ x\in
\Omega  \label{h2}
\end{gather}%
where $\mathcal{H}$, $\widetilde{A}$, $\widetilde{\gamma }$ and $\mathcal{F}$
are respectively given by (\ref{h00}), (\ref{h04}), (\ref{h02}) and (\ref%
{h01});

\item The temperature $T_{b}$ is related to $T$ by:
\end{enumerate}
\end{theorem}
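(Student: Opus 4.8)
The plan is to argue by two-scale convergence (Nguetseng, Allaire), adapted to the two-component geometry and supplemented by the two-scale convergence theory on periodic surfaces for the interfacial term. First I would decode the energy estimate (\ref{dp4ea}) of Theorem \ref{dp4t1} into the separate bounds $\|\nabla T^{\varepsilon}\|_{L^{2}(Q_{1}^{\varepsilon})}\le C$, $\varepsilon\|\nabla T_{b}^{\varepsilon}\|_{L^{2}(Q_{2}^{\varepsilon})}\le C$, $\sqrt{\varepsilon}\,\|T^{\varepsilon}-T_{b}^{\varepsilon}\|_{L^{2}(S^{\varepsilon})}\le C$, together with the $L^{\infty}(0,T;L^{2})$ bounds. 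Since $\widetilde{Y_{1}}$ is connected, there is a uniformly bounded extension operator $P^{\varepsilon}:H^{1}(\Omega_{1}^{\varepsilon})\cap H_{0}^{1}(\Omega)\to H_{0}^{1}(\Omega)$ (Allaire--Murat \cite{am}; see also \cite{hd}); set $\widehat{T}^{\varepsilon}=P^{\varepsilon}T^{\varepsilon}$. Along a subsequence one then has $\widehat{T}^{\varepsilon}\rightharpoonup T$ in $L^{2}(0,T;H_{0}^{1}(\Omega))$, with $\nabla\widehat{T}^{\varepsilon}$ two-scale converging to $\nabla_{x}T+\nabla_{y}T_{1}$ for some $T_{1}\in L^{2}(Q;H_{\#}^{1}(Y_{1})/\mathbb{R})$; the zero-extension of $T_{b}^{\varepsilon}$ two-scale converges to $\chi_{2}(y)T_{b}$ and $\varepsilon$ times its gradient to $\chi_{2}(y)\nabla_{y}T_{b}$, with $T_{b}\in L^{2}(Q;H_{\#}^{1}(Y))$; and, by surface two-scale convergence, $T^{\varepsilon}-T_{b}^{\varepsilon}$ converges on $\Sigma^{\varepsilon}$ to $T(t,x)-T_{b}(t,x,y)$. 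Assertions (1) and (2) then follow at once by averaging over $Y_{1}$ and integrating over $Y_{2}$.

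Next I would insert into (\ref{dp4ie}) test functions of the form $\varphi=\bigl(\varphi_{1}(t,x)+\varepsilon\varphi_{1}^{(1)}(t,x,x/\varepsilon),\,\varphi_{2}(t,x,x/\varepsilon)\bigr)$ with smooth, $Y$-periodic data vanishing at $t=T$, and pass to the two-scale limit. Keeping only $\varphi_{1}^{(1)}$ yields the stationary corrector equation on $Y_{1}$, whence $T_{1}=\sum_{j}\partial_{x_{j}}T\,\chi^{j}$ with $\chi^{j}$ the usual cell correctors and $\widetilde{A}$ the homogenized tensor (\ref{h04}). Keeping only $\varphi_{2}$ gives, after integration by parts in $y$, the cell evolution problem for $T_{b}$: $\partial_{t}T_{b}-\alpha_{b}\Delta_{y}T_{b}=f_{b}$ in $(0,T)\times\Omega\times Y_{2}$, $\alpha_{b}\nabla_{y}T_{b}\cdot\nu=\gamma(T-T_{b})$ on $(0,T)\times\Omega\times\Sigma$, $T_{b}(0,x,y)=h_{b}(x)$, with $x$ a parameter. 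Keeping only $\varphi_{1}(t,x)$ (and $\varphi_{2}=0$) gives the macroscopic identity $|Y_{1}|\,\partial_{t}T-\mathrm{div}(\widetilde{A}\nabla T)+\int_{\Sigma}\gamma(T-T_{b})\,d\sigma_{y}=|Y_{1}|f$ in $Q$, with $T(0)=|Y_{1}|h$ and $T=0$ on $S$.

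It then remains to solve the blood cell problem and read off the memory term. By linearity write $T_{b}=T_{b}^{0}+T_{b}^{1}$, where $T_{b}^{0}$ solves the cell problem with $T\equiv 0$ (so it depends only on $f_{b},h_{b}$) and $T_{b}^{1}$ solves the homogeneous heat equation on $Y_{2}$ with the Robin datum $\alpha_{b}\nabla_{y}T_{b}^{1}\cdot\nu+\gamma T_{b}^{1}=\gamma T$ on $\Sigma$ and zero initial value. If $\theta(s,y)$ denotes the solution of the same problem for $T\equiv 1$ (so $\theta(0,\cdot)=0$, $\theta(\infty,\cdot)\equiv 1$), then Duhamel's principle gives $T_{b}^{1}(t,x,y)=\int_{0}^{t}\partial_{s}\theta(t-\tau,y)\,T(\tau,x)\,d\tau$; substituting into the macroscopic identity turns $\int_{\Sigma}\gamma(T-T_{b})\,d\sigma_{y}$ into $\widetilde{\gamma}\,T-\int_{0}^{t}\mathcal{H}(t-\tau)T(\tau)\,d\tau-\gamma\int_{\Sigma}T_{b}^{0}\,d\sigma_{y}$, with $\widetilde{\gamma}$ as in (\ref{h02}) and $\mathcal{H}(s)=\gamma\int_{\Sigma}\partial_{s}\theta(s,y)\,d\sigma_{y}$ as in (\ref{h00}). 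Collecting the remaining terms and, if one wishes, dividing through by $|Y_{1}|$ produces precisely (\ref{h0})--(\ref{h2}) with $\mathcal{F}$ as in (\ref{h01}), while the representation $T_{b}=T_{b}^{0}+\int_{0}^{t}\partial_{s}\theta(t-\tau,\cdot)\,T(\tau,x)\,d\tau$ is the content of the final item.

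The main obstacle is the rigorous passage to the limit in the interfacial Newton term $\varepsilon\int_{S^{\varepsilon}}\gamma(T^{\varepsilon}-T_{b}^{\varepsilon})(\psi_{1}-\psi_{2})$: this requires the two-scale convergence theory on the oscillating manifold $\Sigma^{\varepsilon}$ together with trace estimates on $\Sigma^{\varepsilon}$, uniform in $\varepsilon$, for both $\widehat{T}^{\varepsilon}$ and $T_{b}^{\varepsilon}$; one must also verify that the two-scale limit $T_{b}$ has enough time regularity — via an Aubin--Lions argument applied to the rescaled block problem — so that the cell evolution problem is well posed and the Duhamel representation is legitimate. Once these points are secured, the identification of $\widetilde{A}$, $\widetilde{\gamma}$, $\mathcal{H}$ and $\mathcal{F}$ is a routine computation, and uniqueness of the Volterra-type homogenized problem (\ref{h0})--(\ref{h2}) could be added to upgrade the convergence from a subsequence to the whole family.
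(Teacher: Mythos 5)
Your proposal follows essentially the same route as the paper: the same oscillating test functions $\bigl(\varphi_{1}+\varepsilon\psi(\cdot,x/\varepsilon),\,\varphi_{2}(\cdot,x/\varepsilon)\bigr)$ in the weak formulation, two-scale compactness (including the surface variant for the Newton term on $\Sigma^{\varepsilon}$) to pass to the limit, identification of the corrector, the cell evolution problem on $Y_{2}$, and the macroscopic balance, and finally Duhamel's principle to convert the interfacial exchange into the memory kernel $\mathcal{H}$. The extras you flag (the Allaire--Murat extension operator, Aubin--Lions for time regularity of $T_{b}$, and uniqueness of the Volterra problem to get whole-sequence convergence) are welcome refinements of points the paper leaves implicit, but they do not change the argument.
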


Finally, we end this section by noticing that (\ref{h0}) is an
integro-differential equation of Barbashin type.

\section{Some auxiliary lemmas and proof of Theorem \protect\ref{dp4t1}}

We begin this section with some standard lemmas needed for proving the
existence and uniqueness result and also for establishing uniform a priori
estimates that are specifically important when using compactness techniques .

\begin{lemma}
\label{dp4l1}There exists a constant $C>0$, independent of $\varepsilon $
such that for all $\varphi _{1}\in H_{0}^{1}\left( \Omega \right) \cap
H^{1}\left( \Omega _{1}^{\varepsilon }\right) $ we have%
\begin{equation}
\left\Vert \varphi _{1}\right\Vert _{0,\Omega _{1}^{\epsilon }}\leq
C\left\Vert \nabla \varphi _{1}\right\Vert _{0,\Omega _{1}^{\epsilon }}.
\label{dp4e2}
\end{equation}
\end{lemma}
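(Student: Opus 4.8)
The plan is to reduce this $\varepsilon$-uniform Poincaré inequality on the perforated domain $\Omega_1^\varepsilon$ to the classical Poincaré inequality on the fixed domain $\Omega$, by means of a uniformly bounded extension operator. Since the $Y$-periodic continuation $\widetilde{Y_1}$ is smooth and connected and the pores stay away from the outer boundary (recall the standing assumption $\overline{\Omega_2^\varepsilon}\subset\Omega$, so that $\Omega_1^\varepsilon$ contains a hole-free neighbourhood of $\partial\Omega$), the construction of G. Allaire and F. Murat \cite{am} furnishes, for every sufficiently small $\varepsilon$, a linear extension operator $P^\varepsilon$ from $H^1\left(\Omega_1^\varepsilon\right)\cap H_0^1\left(\Omega\right)$ into $H_0^1\left(\Omega\right)$ such that $P^\varepsilon\varphi_1=\varphi_1$ a.e. in $\Omega_1^\varepsilon$ and
\begin{equation*}
\left\Vert \nabla\left(P^\varepsilon\varphi_1\right)\right\Vert_{0,\Omega}\leq C\left\Vert \nabla\varphi_1\right\Vert_{0,\Omega_1^\varepsilon},
\end{equation*}
with a constant $C$ independent of $\varepsilon$.

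Granting this, the argument is immediate. First I would extend $\varphi_1$ by $P^\varepsilon$ and record the gradient bound above. Next, since $P^\varepsilon\varphi_1\in H_0^1\left(\Omega\right)$, the standard Poincaré inequality on the bounded domain $\Omega$ yields $\left\Vert P^\varepsilon\varphi_1\right\Vert_{0,\Omega}\leq C_\Omega\left\Vert \nabla\left(P^\varepsilon\varphi_1\right)\right\Vert_{0,\Omega}$ with $C_\Omega$ depending only on $\Omega$. Finally, using that $P^\varepsilon\varphi_1$ agrees with $\varphi_1$ on $\Omega_1^\varepsilon$, I would chain the two estimates:
\begin{equation*}
\left\Vert \varphi_1\right\Vert_{0,\Omega_1^\varepsilon}\leq\left\Vert P^\varepsilon\varphi_1\right\Vert_{0,\Omega}\leq C_\Omega\left\Vert \nabla\left(P^\varepsilon\varphi_1\right)\right\Vert_{0,\Omega}\leq C_\Omega C\left\Vert \nabla\varphi_1\right\Vert_{0,\Omega_1^\varepsilon},
\end{equation*}
which is exactly (\ref{dp4e2}) with the $\varepsilon$-independent constant $C_\Omega C$.

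The only genuinely nontrivial ingredient is the $\varepsilon$-uniform bound on the norm of $P^\varepsilon$: this is where the geometric hypotheses on $Y_1$ (connectedness and smoothness of $\widetilde{Y_1}$) are used, and it is precisely the content of the extension lemma in \cite{am} (see also Cioranescu and Saint Jean Paulin), so I would invoke it rather than reprove it. A self-contained alternative would be a rescaling plus compactness-contradiction argument carried out cell by cell, but controlling the resulting constant uniformly in $\varepsilon$, and in particular handling the layer of cells meeting $\partial\Omega$, essentially amounts to rebuilding the same extension operator; hence the route above is the economical one.
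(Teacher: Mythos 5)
Your argument is correct and is essentially the same as the paper's, which simply cites Lemma A.4 of Allaire and Murat \cite{am}; that cited lemma is exactly the uniform Poincar\'e inequality, and its standard proof is the extension-operator-plus-classical-Poincar\'e chain you describe. Nothing further is needed.
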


\begin{proof}
See for instance G. Allaire and F. Murat\cite[Lemma A.4]{am}.
\end{proof}

\begin{lemma}
\label{dp4l2}There exists a constant $C>0$, independent of $\varepsilon $
such that for all $\varphi _{2}\in H^{1}\left( \Omega _{2}^{\varepsilon
}\right) $ we have
\begin{equation}
\left\Vert \varphi _{2}\right\Vert _{0,\Omega _{2}^{\epsilon }}^{2}\leq
C\left( \varepsilon ^{2}\left\Vert \nabla \varphi _{2}\right\Vert _{0,\Omega
_{2}^{\epsilon }}^{2}+\varepsilon \left\Vert \varphi _{2}\right\Vert
_{0,\Sigma ^{\varepsilon }}^{2}\right) .  \label{dp4e1}
\end{equation}
\end{lemma}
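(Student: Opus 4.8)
The plan is to reduce the estimate to a rescaled version on the reference cell $Y$ and then patch the cell estimates together, in the spirit of the standard trace/Poincaré arguments in perforated domains (cf. the arguments behind Lemma \ref{dp4l1} and G. Allaire and F. Murat\cite{am}). First I would establish the unscaled inequality on the unit cell: there is a constant $C_{Y}>0$, depending only on $Y_{2}$ and $\Sigma$, such that
\begin{equation*}
\left\Vert v\right\Vert _{0,Y_{2}}^{2}\leq C_{Y}\left( \left\Vert \nabla v\right\Vert _{0,Y_{2}}^{2}+\left\Vert v\right\Vert _{0,\Sigma }^{2}\right) \qquad \text{for all }v\in H^{1}\left( Y_{2}\right) .
\end{equation*}
This is a compactness argument: if it failed, there would be a sequence $v_{n}\in H^{1}(Y_{2})$ with $\left\Vert v_{n}\right\Vert_{0,Y_{2}}=1$ while $\left\Vert \nabla v_{n}\right\Vert_{0,Y_{2}}^{2}+\left\Vert v_{n}\right\Vert_{0,\Sigma}^{2}\to 0$; by Rellich $v_{n}$ has a subsequence converging strongly in $L^{2}(Y_{2})$ and weakly in $H^{1}(Y_{2})$ to some $v$, which must be constant (zero gradient), of unit $L^{2}$-norm, yet with zero trace on $\Sigma$ (using continuity of the trace operator $H^{1}(Y_{2})\to L^{2}(\Sigma)$ together with strong $H^{1}$ convergence, since the gradients go to zero). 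A nonzero constant with vanishing trace on the $(d-1)$-dimensional manifold $\Sigma$ is impossible, a contradiction. Here the smoothness of $\Sigma$ and the (local) regularity of $\partial Y_{2}=\Sigma$ are what make the trace theorem and Rellich available.

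Next I would rescale. Cover $\Omega$ by the cells $\varepsilon(k+Y)$, $k\in\mathbb{Z}^{d}$, that meet $\Omega$, and for each such $k$ set $v_{k}(y)=\varphi_{2}(\varepsilon(k+y))$ on the corresponding copy of $Y_{2}$. Applying the cell inequality to $v_{k}$ and changing variables $x=\varepsilon(k+y)$ — which introduces a factor $\varepsilon^{d}$ on the volume integrals and $\varepsilon^{d-1}$ on the surface integral, while $\nabla_{y}v_{k}=\varepsilon\nabla_{x}\varphi_{2}$ produces an extra $\varepsilon^{2}$ on the gradient term — yields, for the portion of $\Omega_{2}^{\varepsilon}$ inside that cell,
\begin{equation*}
\left\Vert \varphi_{2}\right\Vert_{0,\varepsilon(k+Y_{2})}^{2}\leq C_{Y}\left( \varepsilon^{2}\left\Vert \nabla\varphi_{2}\right\Vert_{0,\varepsilon(k+Y_{2})}^{2}+\varepsilon\left\Vert \varphi_{2}\right\Vert_{0,\varepsilon(k+\Sigma)}^{2}\right) .
\end{equation*}
Summing over all admissible $k$ and noting that $\bigcup_{k}\varepsilon(k+\Sigma)=\Sigma^{\varepsilon}$ (up to a null set) and $\bigcup_{k}\varepsilon(k+Y_{2})\supset\Omega_{2}^{\varepsilon}$ gives \eqref{dp4e1} with $C=C_{Y}$, independent of $\varepsilon$.

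The only subtlety — and the step I would treat most carefully — is the handling of boundary cells, i.e. cells $\varepsilon(k+Y)$ that are not entirely contained in $\Omega$. Since $\overline{\Omega_{2}^{\varepsilon}}\subset\Omega$ by the standing hypothesis that the blood region is strictly embedded, for $\varepsilon$ small enough every cell that contributes a nonempty piece of $\Omega_{2}^{\varepsilon}$ lies at positive distance from $\partial\Omega$ and is therefore an interior cell on which $\varphi_{2}$ extends to a full $H^{1}(\varepsilon(k+Y_{2}))$ function; thus no truncation or extension near $\partial\Omega$ is needed and the clean summation above is legitimate. (If one did not want to invoke strict embedding, one would instead extend $\varphi_{2}$ to the boundary cells using a uniform extension operator from $H^{1}(Y_{2})$, at the cost of a slightly larger constant, but this is unnecessary here.) This completes the proof.
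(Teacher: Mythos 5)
Your argument is correct: the unit-cell inequality by contradiction/compactness (using that $Y_{2}$ is connected and $\Sigma$ is smooth, so Rellich and the trace theorem apply), followed by the $\varepsilon$-rescaling that produces exactly the factors $\varepsilon^{2}$ on the gradient term and $\varepsilon$ on the surface term, and the summation over interior cells justified by $\overline{\Omega_{2}^{\varepsilon}}\subset\Omega$. The paper does not prove this lemma itself but simply cites C. Conca\cite[Lemma 6.1]{conc}, and what you have written is precisely the standard proof of that cited result, so there is nothing to add.
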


\begin{proof}
See C. Conca\cite[Lemma 6.1]{conc}.
\end{proof}

\begin{lemma}
\label{dp4l3}There exists a constant $C>0$, independent of $\varepsilon $
such that for all $\varphi \in H^{1}\left( \Omega _{1}^{\varepsilon }\right)
$ we have
\begin{equation}
\varepsilon \left\Vert \varphi \right\Vert _{0,\Sigma ^{\varepsilon
}}^{2}\leq C\left( \varepsilon ^{2}\left\Vert \nabla \varphi \right\Vert
_{0,\Omega _{1}^{\varepsilon }}^{2}+\left\Vert \varphi \right\Vert
_{0,\Omega _{1}^{\varepsilon }}^{2}\right) ,  \label{dp4e3}
\end{equation}%
and%
\begin{equation}
\varepsilon \left\Vert \varphi \right\Vert _{0,\Sigma ^{\varepsilon
}}^{2}\leq C\left( \left\Vert \nabla \varphi \right\Vert _{0,\Omega
_{1}^{\varepsilon }}^{2}\right) .  \label{dp4e7}
\end{equation}
\end{lemma}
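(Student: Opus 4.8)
The plan is to reduce both inequalities to a single trace inequality on the fixed reference cell $Y_{1}$ by a scaling argument, and then to deduce \eqref{dp4e7} from \eqref{dp4e3} together with the Poincaré inequality of Lemma \ref{dp4l1}. First, since $Y_{1}$ is a bounded smooth domain and $\Sigma \subset \partial Y_{1}$, the trace map $H^{1}(Y_{1})\to L^{2}(\partial Y_{1})$ is bounded, so there is a constant $C_{0}=C_{0}(Y_{1})$ with $\Vert v\Vert _{0,\Sigma }^{2}\le C_{0}\big(\Vert \nabla v\Vert _{0,Y_{1}}^{2}+\Vert v\Vert _{0,Y_{1}}^{2}\big)$ for all $v\in H^{1}(Y_{1})$. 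This is the only ingredient that is genuinely analytic; everything else is bookkeeping.

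Next I would transfer this to the $\varepsilon$-cells. Fix $k\in \mathbb{Z}^{d}$ such that $\varepsilon (k+\Sigma )$ is a component of $\Sigma ^{\varepsilon }$; because $\overline{\Omega _{2}^{\varepsilon }}\subset \Omega $, for $\varepsilon $ small one has $\varepsilon (k+Y_{1})\subset \Omega _{1}^{\varepsilon }$ for all such $k$, the finitely many cells abutting $\partial \Omega $ being handled exactly as in C. Conca \cite{conc} and G. Allaire and F. Murat \cite{am}. For $\varphi \in H^{1}(\Omega _{1}^{\varepsilon })$ put $v(y)=\varphi (\varepsilon (k+y))$, $y\in Y_{1}$; the change of variables $x=\varepsilon (k+y)$ gives $\Vert v\Vert _{0,\Sigma }^{2}=\varepsilon ^{1-d}\Vert \varphi \Vert _{0,\varepsilon (k+\Sigma )}^{2}$, $\Vert v\Vert _{0,Y_{1}}^{2}=\varepsilon ^{-d}\Vert \varphi \Vert _{0,\varepsilon (k+Y_{1})}^{2}$ and $\Vert \nabla v\Vert _{0,Y_{1}}^{2}=\varepsilon ^{2-d}\Vert \nabla \varphi \Vert _{0,\varepsilon (k+Y_{1})}^{2}$. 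Inserting these into the reference estimate and multiplying by $\varepsilon ^{d}$ yields the local inequality $\varepsilon \Vert \varphi \Vert _{0,\varepsilon (k+\Sigma )}^{2}\le C_{0}\big(\varepsilon ^{2}\Vert \nabla \varphi \Vert _{0,\varepsilon (k+Y_{1})}^{2}+\Vert \varphi \Vert _{0,\varepsilon (k+Y_{1})}^{2}\big)$. Since the $\varepsilon (k+\Sigma )$ partition $\Sigma ^{\varepsilon }$ and the $\varepsilon (k+Y_{1})$ are pairwise disjoint subsets of $\Omega _{1}^{\varepsilon }$, summing over the admissible $k$ gives \eqref{dp4e3} with a constant depending only on $Y_{1}$, hence independent of $\varepsilon $.

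For \eqref{dp4e7} I would simply combine \eqref{dp4e3} with Lemma \ref{dp4l1}: when $\varphi$ additionally vanishes on $\partial \Omega$ — which is the case relevant to the application, namely the first component $T^{\varepsilon }$ of $w^{\varepsilon }$ — Lemma \ref{dp4l1} gives $\Vert \varphi \Vert _{0,\Omega _{1}^{\varepsilon }}^{2}\le C\Vert \nabla \varphi \Vert _{0,\Omega _{1}^{\varepsilon }}^{2}$, and plugging this into \eqref{dp4e3} and using $\varepsilon ^{2}\le \varepsilon \le 1$ to absorb the gradient term produces \eqref{dp4e7}.

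The only point that requires care is the $\varepsilon$-independence of the constants, and this is exactly what passing through the single fixed cell $Y_{1}$ delivers; the treatment of the cells meeting $\partial \Omega $ is routine and is the same issue already present in Lemmas \ref{dp4l1} and \ref{dp4l2}, so I do not expect it to be a serious obstacle.
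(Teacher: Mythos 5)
Your proof is correct and follows essentially the same route as the paper: the trace inequality on the fixed reference cell $Y_{1}$, the rescaling $x=\varepsilon \left( k+y\right) $ with the resulting $\varepsilon $-weights, summation over the cells contained in $\Omega $, and then Lemma \ref{dp4l1} to absorb the $L^{2}$ term and obtain \eqref{dp4e7}. You are also right to flag that \eqref{dp4e7} requires $\varphi $ to vanish on $\partial \Omega $ (constants give a counterexample otherwise); the paper's statement omits this hypothesis but its proof tacitly uses it through the Friedrichs inequality \eqref{dp4e2}.
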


\begin{proof}
Using the trace theorem on $Y_{1}$ (see for e.g. R. A. Adams and J. F.
Fournier\cite{ada}), we know that there exists a constant $C\left(
Y_{1}\right) >0$ such that for every $\psi \in H^{1}\left( Y_{1}\right) $
\begin{equation*}
\int_{\Sigma }\left\vert \psi \right\vert ^{2}\hspace{0.01in}\mathrm{d}%
\sigma \leq C\left( \int_{Y_{1}}\left\vert \nabla \psi \right\vert ^{2}%
\hspace{0.01in}\mathrm{d}y+\int_{Y_{1}}\left\vert \psi \right\vert ^{2}%
\hspace{0.01in}\mathrm{d}y\right) .
\end{equation*}%
Then, we change $y$ by $x/\varepsilon $ and we get
\begin{equation}
\varepsilon \int_{\Sigma ^{\epsilon k}}\left\vert \varphi \right\vert ^{2}%
\hspace{0.01in}\mathrm{d}\sigma ^{\varepsilon }\leq C\left( \varepsilon
^{2}\int_{Y_{1}^{\varepsilon k}}\left\vert \nabla \varphi \right\vert ^{2}%
\hspace{0.01in}\mathrm{d}x+\int_{Y_{1}^{\epsilon k}}\left\vert \varphi
\right\vert ^{2}\hspace{0.01in}\mathrm{d}x\right) ,\   \label{dp4eq1}
\end{equation}%
for every $\varphi \in H^{1}\left( Y_{1}^{\epsilon k}\right) $, where
\begin{equation*}
Y_{1}^{\epsilon k}=\varepsilon \left( k+\Omega _{1}^{\varepsilon }\right) ,\
\Gamma ^{\epsilon k}=\varepsilon \left( k+\Gamma \right) \ \ k\in \mathbb{Z}%
^{d}\text{.}
\end{equation*}%
Note that the constant $C$ appearing in (\ref{dp4eq1}) is the same for all $%
\varepsilon >0$ and for all $k\in \mathbb{Z}^{d}$. Now, by taking the sum of
the inequalities (\ref{dp4eq1}) over all the cells $Y_{1}^{\epsilon k}$\
contained in $\Omega $, we obtain (\ref{dp4e3}). In fact, on the part of the
cells which contain a portion of the exterior boundary, the estimate (\ref%
{dp4e3}) still holds true, since those cells lie at a distance $O\left(
\varepsilon \right) $. As $\varepsilon $ is sufficiently small, say $%
\varepsilon <1$, we have from (\ref{dp4e3}) that for all $\varphi \in
H^{1}\left( \Omega _{1}^{\varepsilon }\right) $
\begin{equation}
\varepsilon \int_{\Gamma ^{\epsilon }}\left\vert \varphi \right\vert ^{2}%
\hspace{0.01in}\mathrm{d}\sigma ^{\varepsilon }\leq C\left( \int_{\Omega
_{1}^{\epsilon }}\left\vert \nabla \varphi \right\vert ^{2}\hspace{0.01in}%
\mathrm{d}x+\int_{\Omega _{1}^{\epsilon }}\left\vert \varphi \right\vert ^{2}%
\hspace{0.01in}\mathrm{d}x\right)  \label{dp4e4}
\end{equation}%
and using the Friedrich inequality (\ref{dp4e2}) in (\ref{dp4e4}), we get (%
\ref{dp4e7}).
\end{proof}

\begin{proof}[Proof of Theorem \protect\ref{dp4t1}]
We shall use the Lions Lemma (see for instance R. Showalter\cite[Prop. 2.3.,
Chap. III]{sb}). Since $a^{\varepsilon }\left( \cdot ,\cdot \right) $ is
coercive and continuous, it only remains to prove the continuity of the
form:
\begin{equation*}
\varphi =\left( \varphi _{1},\varphi _{2}\right) \mapsto L^{\varepsilon
}\left( \left( \varphi _{1},\varphi _{2}\right) \right) =\int_{0}^{T}\left(
h^{\varepsilon },\varphi \left( t\right) \right) _{H^{\varepsilon }}\hspace{%
0.01in}\mathrm{d}t
\end{equation*}%
on $L^{2}\left( 0,T;\left( V^{\varepsilon }\right) ^{\prime }\right) $.
First, using Cauchy-Schwarz inequality and (\ref{dp4h7}), we see that for
all $\varphi =\left( \varphi _{1},\varphi _{2}\right) \in V^{\varepsilon }$,
\begin{eqnarray}
\left\vert L^{\varepsilon }\left( \left( \varphi _{1},\varphi _{2}\right)
\right) \right\vert &=&\left\vert \int_{0}^{T}\left( \int_{\Omega
_{1}^{\epsilon }}f_{1}\varphi _{1}\hspace{0.01in}\mathrm{d}x+\int_{\Omega
_{2}^{\epsilon }}f_{2}\varphi _{2}\hspace{0.01in}\mathrm{d}x\right)
\right\vert  \notag \\
&\leq &M\left( f_{1},f_{2}\right) \left( ||\varphi _{1}||_{0,\Omega
_{T}^{\varepsilon }}+||\varphi _{2}||_{0,\Omega _{T}^{\varepsilon }}\right)
\label{dp4e9}
\end{eqnarray}%
where
\begin{equation*}
M\left( f_{1},f_{2}\right) =\max \left( ||f_{1}||_{0,\Omega
_{T}},||f_{2}||_{0,\Omega _{T}}\right) <+\infty
\end{equation*}%
is a constant independent of $\varepsilon $. Observe that $f\in L^{2}\left(
0,T;H^{\varepsilon }\right) $. Next, from (\ref{dp4e1}), we get
\begin{eqnarray}
\int_{\Omega _{2}^{\epsilon }}\left\vert \varphi _{2}\right\vert ^{2}\hspace{%
0.01in}\mathrm{d}x &\leq &C\left( \varepsilon ^{2}\int_{\Omega
_{2}^{\epsilon }}\left\vert \nabla \varphi _{2}\right\vert ^{2}\hspace{0.01in%
}\mathrm{d}x+\varepsilon \int_{\Gamma ^{\epsilon }}\left\vert \varphi
_{1}-\varphi _{2}\right\vert ^{2}\hspace{0.01in}\mathrm{d}\sigma
^{\varepsilon }\right.  \notag \\
&&\left. +\varepsilon \int_{\Gamma ^{\epsilon }}\left\vert \varphi
_{1}\right\vert ^{2}\hspace{0.01in}\mathrm{d}\sigma ^{\varepsilon }\right) .
\label{dp4e5}
\end{eqnarray}%
Now, combining (\ref{dp4e7}) and (\ref{dp4e5}) give
\begin{eqnarray*}
\int_{\Omega _{2}^{\epsilon }}\left\vert \varphi _{2}\right\vert ^{2}\hspace{%
0.01in}\mathrm{d}x &\leq &C\left( \int_{\Omega _{1}^{\epsilon }}\left\vert
\nabla \varphi _{1}\right\vert ^{2}\hspace{0.01in}\mathrm{d}x+\varepsilon
^{2}\int_{\Omega _{2}^{\epsilon }}\left\vert \nabla \varphi _{2}\right\vert
^{2}\hspace{0.01in}\mathrm{d}x\right. \\
&&\left. +\varepsilon \int_{\Gamma ^{\epsilon }}\left\vert \varphi
_{1}-\varphi _{2}\right\vert ^{2}\hspace{0.01in}\mathrm{d}\sigma
^{\varepsilon }\right) .
\end{eqnarray*}%
which means that
\begin{equation}
\int_{\Omega _{T}^{\epsilon }}\left\vert \varphi _{2}\right\vert ^{2}\hspace{%
0.01in}\mathrm{d}x\leq C\left\Vert \left( \varphi _{1},\varphi _{2}\right)
\right\Vert _{V^{\epsilon }}^{2}.  \label{dp4e8}
\end{equation}%
Observe that (\ref{dp4e1}) yields
\begin{equation}
\int_{\Omega _{T}^{\epsilon }}\left\vert \varphi _{1}\right\vert ^{2}\hspace{%
0.01in}\mathrm{d}x\leq C\left\Vert \left( \varphi _{1},\varphi _{2}\right)
\right\Vert _{V^{\epsilon }}^{2}.  \label{dp4e6}
\end{equation}%
Using (\ref{dp4e9}), (\ref{dp4e8}) and (\ref{dp4e6}) we deduce that
\begin{equation}
\left\vert L^{\varepsilon }\left( \left( \varphi _{1},\varphi _{2}\right)
\right) \right\vert \leq C\left\Vert \left( \varphi _{1},\varphi _{2}\right)
\right\Vert _{V^{\epsilon }}.  \label{lc1}
\end{equation}%
Thus, $L^{\varepsilon }$ is continuous on $L^{2}\left( 0,T;V^{\varepsilon
}\right) $. Note that the constant $C$ appearing in (\ref{dp4e9}) is
independent of $\varepsilon $.

By Lions Lemma, we conclude that there exists a unique solution $\left(
T^{\varepsilon },T_{b}^{\varepsilon }\right) \in L^{2}\left(
0,T;V^{\varepsilon }\right) $ to the weak formulation of (\ref{dp4p1})-(\ref%
{dp4p7}). Finally, putting $\left( \varphi _{1},\varphi _{2}\right) =\left(
T^{\varepsilon },T_{b}^{\varepsilon }\right) $ in (\ref{dp4ie}), using the
uniform coerciveness of $a^{\varepsilon }\left( \cdot ,\cdot \right) $, the
continuity of $L^{\varepsilon }$ and the Gronwall inequality yield the
uniform estimate (\ref{dp4ea}). This concludes the proof of the Theorem.
\end{proof}

\begin{remark}
\label{dp4r2}If $h=\left( h,h_{b}\right) $ is given in $V^{\varepsilon }$
then one can easily see that $w^{\varepsilon }=\left( T^{\varepsilon
},T_{b}^{\varepsilon }\right) \in W^{1,2}\left( 0,T;H^{\varepsilon }\right) $
and therefore $w^{\varepsilon }\in C\left( 0,T;V^{\varepsilon }\right) $.
\end{remark}

\

\section{The Homogenization procedure\label{s3}}

We shall first use the formal two-scale method (see for example A.
Bensoussan \& \textit{al.}\cite{blp} and E. Sanchez-Palencia\cite{san}) to
derive the homogenized system of (\ref{dp4p1})-(\ref{dp4p7}). To this end,
let us assume the following formal expansions for the two temperatures:
\begin{eqnarray}
T^{\varepsilon }\left( t,x\right) &=&T_{0}\left( t,x,y\right) +\varepsilon
T_{1}\left( t,x,y\right) +\varepsilon ^{2}T_{2}\left( t,x,y\right) +\ldots
\label{ex1} \\
T_{b}^{\varepsilon }\left( t,x\right) &=&T_{b}\left( t,x,y\right)
+\varepsilon T_{b1}\left( t,x,y\right) +\varepsilon ^{2}T_{b2}\left(
t,x,y\right) +\ldots  \label{ex2}
\end{eqnarray}%
where $y=x/\varepsilon $ is the microscopic variable and $\ T_{k}\left(
\cdot ,\cdot ,y\right) ,T_{bk}\left( \cdot ,\cdot ,y\right) ,\ldots $ ($%
k=0,1,2,\cdots $) are smooth unknown functions that are $Y$-periodic in the
third variable $y$. The idea of the two-scale method is to plug the above
asymptotic expansions (\ref{ex1})-(\ref{ex2}) into the set of equations (\ref%
{dp4p1})--(\ref{dp4p7}) and to identify powers of $\varepsilon $. This
yields a hierarchy of initial boundary value problems for the successive
terms $T_{k},T_{bk}$.

\begin{notation}
In what follows, the subscript $x$, $y$ on a differential operator denotes
the derivative with respect to $x$, $y$ respectively.
\end{notation}

At the first step, Equation (\ref{dp4p1}) at $\varepsilon ^{-2}$ order and
Equation (\ref{dp4p3}) at $\varepsilon ^{-1}$ order give
\begin{equation}
-\mathrm{div}_{y}\left( \alpha \nabla _{y}T_{0}\right) =0\text{ in }Q\times
Y_{1}  \label{2m}
\end{equation}%
and
\begin{equation}
\alpha \nabla _{y}T_{0}\cdot \nu =0\text{ on }Q\times \Sigma \text{,}
\label{2mm}
\end{equation}%
where $\nu $ is the unit outward normal to $\Sigma .$ Testing (\ref{2m}) by $%
\zeta \in H_{\#}^{1}\left( Y_{1}\right) $, integrating by parts on $Y_{1}$,
taking into account (\ref{2mm}) and the $Y$-periodicity of $\alpha \nabla
_{y}T_{0},$ we get the following weak formulation:%
\begin{equation*}
\left\{
\begin{array}{l}
a_{Y_{1}}\left( T_{0},\zeta \right) \overset{\mathrm{def}}{=}%
\int_{Y_{1}}\alpha \nabla _{y}T_{0}\cdot \nabla _{y}\zeta \hspace{0.01in}%
\mathrm{d}y=0\ \ \text{for every }\zeta \in H_{\#}^{1}\left( Y_{1}\right) /%
\mathbb{R}\text{,} \\
\  \\
T_{0}\in H_{\#}^{1}\left( Y_{1}\right) /\mathbb{R}\text{.}%
\end{array}%
\right.
\end{equation*}%
The bilinear form $a_{Y_{1}}$ is clearly continuous and coercive on $%
H_{\#}^{1}\left( Y_{1}\right) /\mathbb{R}$ and therefore standard results on
uniformly elliptic equations in periodic domain (A. Bensoussan \& \textit{al.%
}\cite{blp}) yields that $T$ is independent of the periodic variable $y$,
namely there exist $T\left( t,x\right) $
\begin{equation*}
T_{0}\left( t,x,y\right) =T\left( t,x\right) ,\ \ \ \ \text{for a.e. }t\in
\left( 0,T\right) \text{ and \ \ }x\in \Omega \text{.}
\end{equation*}%
Next, in the second step, Equation (\ref{dp4p1}) at $\varepsilon ^{-1}$,
Equation (\ref{dp4p3}) at $\varepsilon ^{0}$ orders give the following
corrector problem:
\begin{eqnarray}
&&-\alpha \Delta _{y}T_{1}=0\ \text{in }Q\times Y_{1},  \label{fv4} \\
&&\left( \alpha \nabla _{y}T_{1}\right) \cdot \nu =-\left( \alpha \nabla
T\right) \cdot \nu \text{ on }Q\times \Sigma ,  \label{fv5} \\
&&y\longmapsto T_{1}\left( x,y\right) \text{ }Y-\text{periodic, }\left(
t,x\right) \in Q.  \label{fv6}
\end{eqnarray}%
The corresponding weak formulation is given by
\begin{equation*}
\text{(}\mathcal{P}\text{)\ }\left\{
\begin{array}{l}
a_{Y_{1}}\left( T_{1},\zeta \right) =-a_{Y_{1}}\left( T,\zeta \right) ,\ \
\forall \zeta \in H_{\#}^{1}\left( Y_{1}\right) /\mathbb{R}\text{,} \\
\  \\
T_{1}\in H_{\#}^{1}\left( Y_{1}\right) /\mathbb{R}\text{.}%
\end{array}%
\right.
\end{equation*}%
As before, there exists a unique solution $T_{1}\left( t,x,\cdot \right) \in
H_{\#}^{1}\left( Y_{1}\right) /\mathbb{R}$ of problem ($\mathcal{P}$) which
can be computed as follows. Let us consider for $1\leq i\leq d,$ the
following cell problems:
\begin{equation*}
\text{(}\mathcal{P}_{i}\text{)\ }\left\{
\begin{array}{l}
a_{Y_{1}}\left( \omega _{i},\zeta \right) =-a_{Y_{1}}\left( e_{i},\zeta
\right) ,\ \ \forall \zeta \in H_{\#}^{1}\left( Y_{1}\right) /\mathbb{R}, \\
\  \\
\omega _{i}\in H_{\#}^{1}\left( Y_{1}\right) /\mathbb{R}%
\end{array}%
\right.
\end{equation*}%
where $\left( e_{i}\right) _{1\leq i\leq d}$ is the canonical basis. These
problems ($\mathcal{P}_{i}$) are obtained from ($\mathcal{P}$) by replacing $%
\nabla T$ with the vector $e_{i}$, $i=1,2,\cdots ,d$. It follows that for
each $i$ Problem ($\mathcal{P}_{i}$) admits a unique solution $\omega
_{i}\in H_{\#}^{1}\left( Y_{1}\right) /\mathbb{R}$. Furtheremore, thanks to
the linearity of ($\mathcal{P}$), we may write that:
\begin{equation}
T_{1}\left( t,x,y\right) =\sum_{i=1}^{d}\frac{\partial T}{\partial x_{i}}%
\left( t,x\right) \omega _{i}\left( y\right) +\tilde{u}\left( t,x\right)
\label{3m}
\end{equation}%
for a.e. $\left( t,x,y\right) \in Q\times Y_{1}$ and where $\tilde{u}\left(
t,x\right) $ is any additive constant.

At the final step, Equation (\ref{dp4p1})-(\ref{dp4p2}) at $\varepsilon ^{0}$%
, Equations (\ref{dp4p3})-(\ref{dp4p4}) at $\varepsilon ^{1}$ orders yield
the following initial boundary-value problem:
\begin{gather}
-\alpha \Delta _{y}T_{2}=f-\partial _{t}T+\alpha \mathrm{div}_{y}\left(
\nabla _{x}T_{1}\right) +  \notag \\
\ \ \ \ \ \ \ \ \ \ \ \ \ \ \ \alpha \mathrm{div}_{x}\left( \left( \nabla
_{y}T_{1}+\nabla _{x}T\right) \right) \ \text{in }Q\times Y_{1},  \label{4o}
\end{gather}%
\begin{equation}
\partial _{t}T_{b}-\alpha _{b}\Delta _{y}T_{b}=f_{b}\text{ in }Q\times Y_{2},
\label{4n}
\end{equation}%
\begin{equation}
\left( \alpha \nabla _{y}T_{2}\right) \cdot \nu =-\left( \alpha \nabla
_{x}T_{1}\right) \cdot \nu +\left( \alpha _{b}\nabla _{y}T_{b}\right) \cdot
\nu \text{ on }Q\times \Sigma ,  \label{4p}
\end{equation}%
\begin{equation}
\alpha \nabla _{y}T_{2}\cdot \nu =-\alpha \nabla _{x}T_{1}\cdot \nu +\gamma
\left( T-T_{b}\right) \text{ on }Q\times \Sigma ,  \label{4q}
\end{equation}%
\begin{eqnarray}
y &\mapsto &T_{2}\left( t,x,y\right) \text{ }Y-\text{periodic,}  \label{4r}
\\
y &\mapsto &\ T_{b}\left( t,x,y\right) \text{ }Y-\text{periodic.}  \label{4u}
\end{eqnarray}

The weak formulation of Equations (\ref{4o})--(\ref{4r}) is \
\begin{equation*}
(\mathcal{P})\left\{
\begin{array}{l}
a_{Y_{1}}\left( T_{2},\zeta \right) =\left\langle F,\zeta \right\rangle \ \
\text{for every }\zeta \in H_{\#}^{1}\left( Y_{1}\right) /\mathbb{R}\text{,}
\\
\  \\
T_{2}\in H_{\#}^{1}\left( Y_{1}\right) /\mathbb{R}%
\end{array}%
\right.
\end{equation*}%
where
\begin{eqnarray*}
\left\langle F,\psi \right\rangle &=&\left( \int_{Y_{1}}-\zeta \hspace{0.01in%
}\mathrm{d}y\right) \partial _{t}T+\int_{Y_{1}}\mathrm{div}_{x}\left( \alpha
\left( \nabla _{y}T_{1}+\nabla T\right) \right) \zeta \hspace{0.01in}\mathrm{%
d}y \\
&&-\int_{Y_{1}}\alpha \nabla _{x}T_{1}\cdot \nabla _{y}\zeta \hspace{0.01in}%
\mathrm{d}y+\int_{Y_{1}}f\zeta \ \mathrm{d}y+\int_{\Sigma }\gamma \left(
T-T_{b}\right) \zeta \hspace{0.01in}\mathrm{d}\sigma
\end{eqnarray*}%
Using the divergence Theorem ( as in E. Sanchez-Palencia\cite{san}), a
necessary condition for the existence of $T_{2}$ is that $\left\langle
F,1\right\rangle =0$, namely
\begin{eqnarray}
&&\int_{Y_{1}}\left( -\partial _{t}T+\mathrm{div}_{x}\left( \alpha \left(
\nabla _{y}T_{1}+\nabla T\right) \right) \right) \hspace{0.01in}\mathrm{d}y
\notag \\
&&+\int_{\Sigma }\gamma \left( T-T_{b}\right) \hspace{0.01in}\mathrm{d}%
\sigma =\left\vert Y_{1}\right\vert f.  \label{5a}
\end{eqnarray}%
Using (\ref{3m}), equation (\ref{5a}) becomes
\begin{equation}
\left\vert Y_{1}\right\vert \partial _{t}T-\mathrm{div}\left( \widetilde{A}%
\nabla T\right) +\int_{\Sigma }\gamma \left( T-T_{b}\right) \hspace{0.01in}%
\mathrm{d}\sigma =\left\vert Y_{1}\right\vert f,  \label{hs1}
\end{equation}%
where $\left\vert Y_{1}\right\vert $ stands for the volume of $Y_{1}$, The
matrix $\widetilde{A}$ is given by
\begin{equation}
\widetilde{A}=\left( \tilde{a}_{ij}\right) _{1\leq i,j\leq d}\text{,\ \ \ }%
\tilde{a}_{ij}=\int_{Y_{1}}\alpha \left( \nabla _{y}\omega _{i}+e_{i}\right)
\cdot \left( \nabla _{y}\omega _{j}+e_{j}\right) \hspace{0.01in}\mathrm{d}y%
\text{.}  \label{h04}
\end{equation}%
Equation (\ref{hs1}) is the so-called macroscopic equation for the
temperature $T$. The boundary condition for $T$ is obtained from (\ref{dp4p5}%
) at $\varepsilon ^{0}$ order and it reads
\begin{equation}
T=0\text{ on }\partial \Omega \text{.}  \label{b1}
\end{equation}%
Similarly, Equations (\ref{dp4p6})-(\ref{dp4p7}) give the initial conditions
for $T$ and $T_{b}$:
\begin{eqnarray}
&&T\left( 0,x\right) =\left\vert Y_{1}\right\vert h\left( x\right) ,\ x\in
\Omega ,  \label{4s} \\
&&T_{b0}\left( 0,x,y\right) =\chi _{2}\left( y\right) h_{b}\left( x\right)
,\ x\in \Omega ,\ \ y\in Y_{2}  \label{4t}
\end{eqnarray}

Next, we proceed further and focus our attention on fluid temperature $%
T_{b}\left( t,x,y\right) $. From (\ref{4p}) and (\ref{4q}) it is easily seen
that
\begin{equation}
\alpha _{b}\nabla _{y}T_{b}\cdot \nu =\gamma \left( T-T_{b}\right) \text{ on
}Q\times \Sigma .  \label{hs2}
\end{equation}%
It is easily shown that (\ref{4n}), (\ref{hs2}) and (\ref{4t}) admits a
unique weak solution $T_{b}\in H_{\#}^{1}\left( Y_{2}\right) $. Moreover,
denoting%
\begin{equation*}
b\left( \zeta ,\eta \right) =\int_{Y_{2}}\alpha _{b}\nabla _{y}\zeta \cdot
\nabla _{y}\eta \hspace{0.01in}\mathrm{d}y+\int_{\Sigma }\gamma \zeta \eta
\hspace{0.01in}\mathrm{d}\sigma ,\ \ \ \zeta ,\eta \in H^{1}\left(
Y_{2}\right)
\end{equation*}%
and $\mathcal{B\ }:\ H^{1}\left( Y_{2}\right) \longrightarrow H^{1}\left(
Y_{2}\right) $ defined by $\left\langle \mathcal{B}\left( \zeta \right)
,\eta \right\rangle =b\left( \zeta ,\eta \right) $, and applying (as in U.
Hornung\cite{hor}) the Duhamel's principle to equations (\ref{4n}), (\ref%
{hs2}) and (\ref{4t}), The leading term $T_{b}$ ca be thus decomposed as the
sum of three terms:\textbf{\ }%
\begin{eqnarray}
T_{b}\left( \tau ,x,y\right) &=&T_{bi}\left( \tau ,x,y\right)
+\int_{0}^{\tau }\partial _{\tau }\omega \left( \tau -t,y\right) T\left(
t,x\right) \hspace{0.01in}\mathrm{d}t  \notag \\
&&+\int_{0}^{\tau }\partial _{\tau }\mu \left( \tau -t,y\right) f_{b}\left(
t,x\right) \hspace{0.01in}\mathrm{d}t  \label{hs8}
\end{eqnarray}%
where $T_{bi}$ is the evolution of the initial temperature $h_{b}$. It is
given by $T_{bi}\left( t,x,y\right) =\mathrm{e}^{-t\mathcal{B}}h_{b}\left(
x,y\right) =\theta \left( t,y\right) h_{b}\left( x\right) $ where $\theta
\left( t,y\right) $ is the unique of the weak solution of the cell problem:
\begin{eqnarray*}
&&\partial _{t}\theta -\alpha _{b}\Delta _{y}\theta =0\text{ in }\left(
0,T\right) \times Y_{2}, \\
&&\alpha _{b}\nabla _{y}\theta \cdot \nu +\gamma \theta =0\text{ on }\left(
0,T\right) \times \Sigma , \\
&&y\longmapsto \theta \left( t,y\right) \text{ }Y-\text{periodic,} \\
&&\theta \left( 0,y\right) =1\text{ in }\Omega \times Y_{2}\text{.}
\end{eqnarray*}%
On the other hand $\omega ,$ $\sigma $\ are respectively the unique weak
solutions of the following cell problems:
\begin{eqnarray*}
&&\partial _{t}\omega -\alpha _{b}\Delta _{y}\omega =0\text{ in }\left(
0,T\right) \times Y_{2}, \\
&&\alpha _{b}\nabla _{y}\omega \cdot \nu +\gamma \omega =\gamma \text{ on }%
\left( 0,T\right) \times \Sigma , \\
&&y\longmapsto \omega \left( t,y\right) \text{ }Y-\text{periodic,} \\
&&\omega \left( 0,y\right) =0\text{ in }\Omega \times Y_{2}\text{,}
\end{eqnarray*}%
and
\begin{eqnarray*}
&&\partial _{t}\sigma -\alpha _{b}\Delta _{y}\sigma =1\text{ in }\left(
0,T\right) \times Y_{2}, \\
&&\alpha _{b}\nabla _{y}\sigma \cdot \nu +\gamma \sigma =0\text{ on }\left(
0,T\right) \times \Sigma , \\
&&y\longmapsto \sigma \left( t,y\right) \text{ }Y-\text{periodic,} \\
&&\sigma \left( 0,y\right) =0\text{ in }\Omega \times Y_{2}\text{.}
\end{eqnarray*}

Inserting (\ref{hs8}) into (\ref{hs1}) we get the homogenized
integro-differential equation of Barbashin type for the temperature $T$ (see
(\ref{h0})-(\ref{h2})):%
\begin{eqnarray*}
&&\partial _{t}T-\int_{0}^{t}\mathcal{H}\left( t-\tau \right) T\left( \tau
\right) \hspace{0.01in}d\tau -\mathrm{div}\left( \widetilde{A}\nabla
T\right) +\widetilde{\gamma }T=\mathcal{F}\text{ in }Q, \\
&&T=\text{ }0\text{ on }S, \\
&&T\left( 0,x\right) =\left\vert Y_{1}\right\vert h\left( x\right) ,\ x\in
\Omega
\end{eqnarray*}%
where%
\begin{equation}
\widetilde{\gamma }=\frac{1}{\left\vert Y_{1}\right\vert }\int_{\Sigma
}\gamma \ \mathrm{d}\sigma  \label{h02}
\end{equation}%
and, where $\mathcal{H}$ and $\mathcal{F}$ are given by
\begin{eqnarray}
\mathcal{H}\left( \tau ,x\right) &=&\frac{1}{\left\vert Y_{1}\right\vert }%
\int_{\Sigma }\gamma (y)\partial _{t}\omega \left( \tau ,y\right) \hspace{%
0.01in}\mathrm{d}\sigma ,  \label{h00} \\
\mathcal{F}\left( \tau ,x\right) &=&f\mathbf{(}x\mathbf{)}+\frac{1}{%
\left\vert Y_{1}\right\vert }\left( \int_{\Sigma }\gamma \left( y\right)
T_{bi}\left( \tau ,x,y\right) \hspace{0.01in}\mathrm{d}\sigma +\right.
\label{h01} \\
&&\left. \int_{0}^{\tau }\int_{\Sigma }\partial _{\tau }\sigma \left( \tau
-t,y\right) f_{b}\left( \tau ,y\right) \hspace{0.01in}\mathrm{d}\sigma
\mathrm{d}t\right) \text{.}  \notag
\end{eqnarray}

\section{Proof of Theorem \protect\ref{dp4t2}}

In this section, we shall derive the homogenized system (\ref{h0})-(\ref{h2}%
). To do so, we shall use the two-scale convergence technique that we recall
hereafter.

We shall first begin with some notations. We define $\mathcal{C}_{\#}(Y)$ to
be the space of all continuous functions on $\mathbb{R}^{d}$ which are $Y$%
-periodic. Let $\mathcal{C}_{\#}^{\infty }(Y)=\mathcal{C}^{\infty }(\mathbb{R%
}^{d})\cap \mathcal{C}_{\#}(Y)$ and let $L_{\#}^{2}\left( Y\right) $ (resp. $%
L_{\#}^{2}\left( Y_{m}\right) $, $m=1,2$) to be the space of all functions
belonging to $L_{\mathrm{loc}}^{2}\left( \mathbb{R}^{d}\right) $ (resp. $L_{%
\mathrm{loc}}^{2}\left( Z_{m}\right) $) which are $Y$-periodic, and $%
H_{\#}^{1}\left( Y\right) $ (resp. $H_{\#}^{1}\left( Y_{m}\right) $) to be
the space of those functions together with their derivatives belonging to $%
L_{\#}^{2}\left( Y\right) $ (resp. $L_{\#}^{2}\left( Z_{m}\right) $).

Now, we recall the definition and main results of the two-scale convergence
method. For more details, we refer the reader to G. Allaire\cite{all}.

\begin{definition}
\label{dp4d2}A sequence $\left( \vartheta ^{\varepsilon }\right) \ $in $%
L^{2}\left( \Omega \right) $ two-scale converges to $\vartheta \in
L^{2}\left( \Omega \times Y\right) $ (we write $\vartheta ^{\varepsilon }%
\overset{2-s}{\rightharpoonup }\vartheta $) if, for any admissible test
function $\varphi \in L^{2}\left( \Omega ;\mathcal{C}_{\#}(Y)\right) $, we
have
\begin{equation*}
\lim_{\varepsilon \rightarrow 0}\int_{\Omega }\vartheta ^{\varepsilon
}\left( x\right) \varphi \left( x,\frac{x}{\varepsilon }\right) \hspace{%
0.01in}\mathrm{d}x=\int_{\Omega \times Y}\vartheta \left( x,y\right) \varphi
\left( x,y\right) \hspace{0.01in}\mathrm{d}x\mathrm{d}y\text{.}
\end{equation*}
\end{definition}

\begin{theorem}
\label{dp4t3}Let $(\vartheta ^{\varepsilon })$ be a sequence of functions in
$L^{2}(\Omega )$. Assume that $(\vartheta ^{\varepsilon })$ is uniformly
bounded. Then, there exist $\vartheta \in L^{2}(\Omega \times Y)$ and a
subsequence of $(\vartheta ^{\varepsilon })$ which two-scale converges\ to $%
T_{b}$.
\end{theorem}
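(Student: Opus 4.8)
The statement to prove is the compactness theorem for two-scale convergence (Theorem \ref{dp4t3}): any bounded sequence $(\vartheta^{\varepsilon})$ in $L^2(\Omega)$ admits a subsequence that two-scale converges to some $\vartheta \in L^2(\Omega\times Y)$. (I note in passing that the statement as written says the limit is ``$T_b$''; this is a typo for ``$\vartheta$'', and I will prove the correct statement.) The plan is the classical argument of Allaire, which proceeds by a duality/compactness argument in the dual of a well-chosen separable Banach space. First I would fix a countable dense subset of admissible test functions and use it to extract a subsequence along which all the relevant pairings converge; then I would identify the limit functional and invoke a Riesz-type representation to produce $\vartheta$; finally I would check $\vartheta\in L^2(\Omega\times Y)$ by a norm lower-semicontinuity estimate.

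In more detail: let $\mathcal{D} = L^2(\Omega;\mathcal{C}_{\#}(Y))$, which is a separable Banach space whose admissible test functions $\varphi(x,y)$ satisfy the key measurability fact that $x\mapsto \varphi(x,x/\varepsilon)$ is measurable with
\begin{equation*}
\left\| \varphi\left(\cdot,\tfrac{\cdot}{\varepsilon}\right) \right\|_{L^2(\Omega)} \le \|\varphi\|_{\mathcal{D}} \quad\text{and}\quad \lim_{\varepsilon\to 0}\left\| \varphi\left(\cdot,\tfrac{\cdot}{\varepsilon}\right)\right\|_{L^2(\Omega)} = \|\varphi\|_{L^2(\Omega\times Y)}.
\end{equation*}
For each $\varepsilon$ define the linear functional $\mu^{\varepsilon}(\varphi) = \int_{\Omega} \vartheta^{\varepsilon}(x)\,\varphi(x,x/\varepsilon)\,\mathrm{d}x$ on $\mathcal{D}$. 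By Cauchy--Schwarz and the uniform bound $\|\vartheta^{\varepsilon}\|_{L^2(\Omega)}\le C$, we get $|\mu^{\varepsilon}(\varphi)| \le C\,\|\varphi\|_{\mathcal{D}}$, so $(\mu^{\varepsilon})$ is bounded in $\mathcal{D}'$. Since $\mathcal{D}$ is separable, the closed balls of $\mathcal{D}'$ are weak-$*$ sequentially compact, so there is a subsequence (not relabelled) and $\mu \in \mathcal{D}'$ with $\mu^{\varepsilon}(\varphi) \to \mu(\varphi)$ for every $\varphi \in \mathcal{D}$, and $\|\mu\|_{\mathcal{D}'}\le C$.

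It remains to represent $\mu$ by an $L^2(\Omega\times Y)$ function. Here is the step I expect to be the main obstacle: $\mathcal{D} = L^2(\Omega;\mathcal{C}_{\#}(Y))$ is dense in the Hilbert space $L^2(\Omega\times Y)$, but $\mu$ is a priori only bounded for the \emph{stronger} $\mathcal{D}$-norm, so one cannot immediately apply the Riesz representation theorem on $L^2(\Omega\times Y)$. One resolves this by establishing the sharper bound
\begin{equation*}
|\mu(\varphi)| \le C\,\|\varphi\|_{L^2(\Omega\times Y)} \qquad \text{for all } \varphi\in\mathcal{D}.
\end{equation*}
This follows by passing to the limit in $|\mu^{\varepsilon}(\varphi)| \le \|\vartheta^{\varepsilon}\|_{L^2(\Omega)}\,\|\varphi(\cdot,\cdot/\varepsilon)\|_{L^2(\Omega)} \le C\,\|\varphi(\cdot,\cdot/\varepsilon)\|_{L^2(\Omega)}$ and using that the right-hand norm converges to $C\,\|\varphi\|_{L^2(\Omega\times Y)}$; the one delicate point is justifying this last limit for general $\varphi\in\mathcal{D}$ (not merely smooth ones), which is done by approximating $\varphi$ in $\mathcal{D}$ by functions of the form $\sum_k a_k(x)b_k(y)$ with $a_k\in L^2(\Omega)$, $b_k\in\mathcal{C}_{\#}(Y)$ and invoking the mean-value / Riemann--Lebesgue lemma for periodic functions on each product term, then a uniform-in-$\varepsilon$ triangle-inequality argument. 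Once the sharp bound holds, $\mu$ extends uniquely to a bounded linear functional on $L^2(\Omega\times Y)$, and the Riesz representation theorem furnishes $\vartheta \in L^2(\Omega\times Y)$ with $\mu(\varphi) = \int_{\Omega\times Y}\vartheta(x,y)\varphi(x,y)\,\mathrm{d}x\,\mathrm{d}y$ for all $\varphi$ in the dense subspace $\mathcal{D}$, which is precisely the assertion $\vartheta^{\varepsilon}\overset{2-s}{\rightharpoonup}\vartheta$. This completes the proof; the norm bound $\|\vartheta\|_{L^2(\Omega\times Y)}\le \liminf\|\vartheta^{\varepsilon}\|_{L^2(\Omega)}$ comes for free from $\|\vartheta\|_{L^2(\Omega\times Y)} = \|\mu\|_{(L^2(\Omega\times Y))'}\le C$.
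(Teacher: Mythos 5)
The paper gives no proof of this theorem: it is quoted as a known result of two-scale convergence theory with a reference to G. Allaire \cite{all} (and you are right that the limit ``$T_{b}$'' in the statement is a typo for $\vartheta$). Your argument is a correct reproduction of Allaire's standard compactness proof --- duality in the separable space $L^{2}\left( \Omega ;\mathcal{C}_{\#}(Y)\right) $, weak-$\ast $ extraction, the sharpened bound obtained from the convergence of $\left\Vert \varphi \left( \cdot ,\cdot /\varepsilon \right) \right\Vert _{L^{2}\left( \Omega \right) }$ to $\left\Vert \varphi \right\Vert _{L^{2}\left( \Omega \times Y\right) }$, and Riesz representation --- so it takes essentially the same route as the source the paper relies on.
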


\begin{theorem}
\label{dp4t4}Let $(\vartheta ^{\varepsilon })$ be a uniformly bounded
sequence in $H^{1}(\Omega )$ (resp. $H_{0}^{1}(\Omega )$). Then, up to a
subsequence, there exist $\vartheta \in H^{1}\left( \Omega \right) $ (resp. $%
H_{0}^{1}(\Omega )$) and $\vartheta _{0}\in L^{2}(\Omega ;H_{\#}^{1}(Y)/%
\mathbb{R})$ such that
\begin{equation*}
\vartheta ^{\varepsilon }\overset{2-s}{\rightharpoonup }\vartheta ;\ \ \ \ \
\nabla \vartheta ^{\varepsilon }\overset{2-s}{\rightharpoonup }\nabla
\vartheta +\nabla _{y}\vartheta _{0}.
\end{equation*}
\end{theorem}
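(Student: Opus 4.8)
The plan is to adapt, to the present setting, the classical compactness argument of G. Allaire\cite{all} identifying the two-scale limit of a bounded sequence of gradients, using only the scalar two-scale compactness already recorded in Theorem \ref{dp4t3}. Since $(\vartheta^{\varepsilon})$ is bounded in $H^{1}(\Omega)$, the sequences $(\vartheta^{\varepsilon})$ and $(\partial_{j}\vartheta^{\varepsilon})$, $1\leq j\leq d$, are bounded in $L^{2}(\Omega)$; applying Theorem \ref{dp4t3} to each of them and extracting a common subsequence (not relabelled) yields $\vartheta_{\ast}\in L^{2}(\Omega\times Y)$ and $\xi=(\xi_{1},\dots,\xi_{d})\in L^{2}(\Omega\times Y)^{d}$ with $\vartheta^{\varepsilon}\overset{2-s}{\rightharpoonup}\vartheta_{\ast}$ and $\nabla\vartheta^{\varepsilon}\overset{2-s}{\rightharpoonup}\xi$. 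By the Rellich theorem we may moreover assume $\vartheta^{\varepsilon}\rightharpoonup\vartheta$ weakly in $H^{1}(\Omega)$ for some $\vartheta\in H^{1}(\Omega)$ (and $\vartheta\in H_{0}^{1}(\Omega)$ in the respective case, since $H_{0}^{1}(\Omega)$ is weakly closed); testing the two-scale convergence of $(\vartheta^{\varepsilon})$ against an admissible function $\varphi(x,y)=\psi(x)$ and comparing with the weak $L^{2}$ limit shows $\int_{Y}\vartheta_{\ast}(\cdot,y)\,\mathrm{d}y=\vartheta$.

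First I would check that $\vartheta_{\ast}$ is independent of $y$. Fix $\Psi\in C_{c}^{\infty}(\Omega;\mathcal{C}_{\#}^{\infty}(Y))^{d}$ and integrate by parts in $\varepsilon\int_{\Omega}\nabla\vartheta^{\varepsilon}(x)\cdot\Psi(x,x/\varepsilon)\,\mathrm{d}x$: the derivative falls on $\Psi$, producing a term $\varepsilon\,(\mathrm{div}_{x}\Psi)(x,x/\varepsilon)$ of order $\varepsilon$ together with $(\mathrm{div}_{y}\Psi)(x,x/\varepsilon)$, which is of order one. Because $\nabla\vartheta^{\varepsilon}$ is bounded in $L^{2}(\Omega)$ the left-hand side is $O(\varepsilon)$, so letting $\varepsilon\to0$ and using Definition \ref{dp4d2} gives $\int_{\Omega\times Y}\vartheta_{\ast}(x,y)\,\mathrm{div}_{y}\Psi(x,y)\,\mathrm{d}x\,\mathrm{d}y=0$ for every such $\Psi$. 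Hence $\nabla_{y}\vartheta_{\ast}=0$, so $\vartheta_{\ast}(x,y)=\vartheta(x)$, which is precisely the first asserted convergence $\vartheta^{\varepsilon}\overset{2-s}{\rightharpoonup}\vartheta$.

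Next I would identify $\xi$. Restrict to $\Psi\in C_{c}^{\infty}(\Omega;\mathcal{C}_{\#}^{\infty}(Y))^{d}$ with $\mathrm{div}_{y}\Psi(x,y)=0$; then upon integrating by parts in $\int_{\Omega}\nabla\vartheta^{\varepsilon}(x)\cdot\Psi(x,x/\varepsilon)\,\mathrm{d}x$ the singular $\varepsilon^{-1}$ contribution cancels and only $-\int_{\Omega}\vartheta^{\varepsilon}(x)\,(\mathrm{div}_{x}\Psi)(x,x/\varepsilon)\,\mathrm{d}x$ remains. Passing to the two-scale limit (through $\nabla\vartheta^{\varepsilon}\overset{2-s}{\rightharpoonup}\xi$ on the left and $\vartheta^{\varepsilon}\overset{2-s}{\rightharpoonup}\vartheta$ on the right) and then integrating by parts in $x$---legitimate since $\Psi$ is compactly supported in $\Omega$ and $\vartheta\in H^{1}(\Omega)$---one reaches
\begin{equation*}
\int_{\Omega\times Y}\bigl(\xi(x,y)-\nabla\vartheta(x)\bigr)\cdot\Psi(x,y)\,\mathrm{d}x\,\mathrm{d}y=0
\end{equation*}
for all smooth, $Y$-periodic, $y$-divergence-free $\Psi$. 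The main obstacle is the remaining step: deducing that $\xi-\nabla\vartheta=\nabla_{y}\vartheta_{0}$ for some $\vartheta_{0}\in L^{2}(\Omega;H_{\#}^{1}(Y)/\mathbb{R})$. This rests on the Weyl/de Rham-type characterization that, in $L_{\#}^{2}(Y)^{d}$, the orthogonal complement of the $Y$-periodic divergence-free fields is exactly $\{\nabla_{y}\phi:\phi\in H_{\#}^{1}(Y)\}$, applied with $x$ as a parameter, combined with a measurable-selection argument giving the $x$-measurability of $\vartheta_{0}$ and the bound $\|\vartheta_{0}\|_{L^{2}(\Omega\times Y)}\leq C\,\|\xi-\nabla\vartheta\|_{L^{2}(\Omega\times Y)}<\infty$; fixing the normalization $\int_{Y}\vartheta_{0}(x,y)\,\mathrm{d}y=0$ then places $\vartheta_{0}$ in $L^{2}(\Omega;H_{\#}^{1}(Y)/\mathbb{R})$ and yields $\nabla\vartheta^{\varepsilon}\overset{2-s}{\rightharpoonup}\nabla\vartheta+\nabla_{y}\vartheta_{0}$, completing the proof.
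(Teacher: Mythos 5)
The paper gives no proof of this statement---it is recalled verbatim from G. Allaire\cite{all}---and your argument is precisely the classical one from that reference: scalar two-scale compactness applied to $\vartheta^{\varepsilon}$ and to each $\partial_{j}\vartheta^{\varepsilon}$, the $O(\varepsilon)$ integration-by-parts test to show the two-scale limit of $\vartheta^{\varepsilon}$ is independent of $y$, and $y$-divergence-free test fields together with the periodic Helmholtz/de Rham decomposition to identify $\xi-\nabla\vartheta$ as $\nabla_{y}\vartheta_{0}$. The proposal is correct and complete (modulo the routine density/separability step needed to localize the orthogonality relation in $x$ before applying the decomposition for a.e.\ $x$), so it matches the proof the paper implicitly relies on.
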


The following result will be of use, see G. Allaire \& \textit{al.}\cite[%
Proposition 2.6]{adh}.

\begin{theorem}
\label{dp4t5}Let $(\vartheta ^{\varepsilon })$ be a sequence of functions in
$H^{1}(\Omega )$ such that
\begin{equation*}
\left\Vert \vartheta ^{\varepsilon }\right\Vert _{L^{2}\left( \Omega \right)
}+\varepsilon \left\Vert \nabla \vartheta ^{\varepsilon }\right\Vert
_{L^{2}\left( \Omega \right) ^{3}}\leq C\text{.}
\end{equation*}%
Then there exist $\vartheta \in L^{2}\left( \Omega ;H_{\#}^{1}(Y)\right) $
and a subsequence of $\left( \vartheta ^{\varepsilon }\right) $, still
denoted by $\left( \vartheta ^{\varepsilon }\right) $, such that
\begin{equation*}
\vartheta ^{\varepsilon }\overset{2-s}{\rightharpoonup }\vartheta ,\ \ \ \ \
\varepsilon \nabla \vartheta ^{\varepsilon }\overset{2-s}{\rightharpoonup }%
\nabla _{y}\vartheta
\end{equation*}%
and for every $\varphi \in \mathcal{D}\left( \Omega ;\mathcal{C}%
_{\#}(Y)\right) $ we have:%
\begin{equation*}
\lim_{\varepsilon \rightarrow 0}\int_{\Sigma ^{\varepsilon }}\vartheta
^{\varepsilon }\left( x\right) \varphi \left( x,\frac{x}{\varepsilon }%
\right) \hspace{0.01in}\mathrm{d}\sigma ^{\varepsilon }=\int_{\Omega \times
\Sigma }\vartheta \left( x,y\right) \varphi \left( x,y\right) \hspace{0.01in}%
\mathrm{d}x\mathrm{d}\sigma .
\end{equation*}
\end{theorem}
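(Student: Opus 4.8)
The plan is to obtain the two interior two-scale limits almost for free from the compactness result (Theorem~\ref{dp4t3}) together with the standard identification of the two-scale limit of an $\varepsilon$-scaled gradient, and then to spend the real effort on the interfacial convergence, which I would establish by transporting the integral over the oscillating manifold $\Sigma^{\varepsilon}$ onto a volume integral over the inclusions $\Omega_{2}^{\varepsilon}$ by means of the divergence theorem, in the spirit of G.~Allaire \& \textit{al.}\cite{adh}. Before starting I observe that, applying Lemma~\ref{dp4l3} to the restriction of $\vartheta^{\varepsilon}$ to $\Omega_{1}^{\varepsilon}$ and using the hypothesis, one has the uniform trace bound $\varepsilon\|\vartheta^{\varepsilon}\|_{0,\Sigma^{\varepsilon}}^{2}\le C$; together with $\varepsilon|\Sigma^{\varepsilon}|\le C$ this keeps the interfacial integrals bounded, and it is also the reason the last limit is to be read with the natural normalisation $\varepsilon\int_{\Sigma^{\varepsilon}}\vartheta^{\varepsilon}\varphi(x,x/\varepsilon)\,\mathrm{d}\sigma^{\varepsilon}$, under which alone it is finite (as in \cite{adh}).

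\textbf{Step 1 (interior limits).} I apply Theorem~\ref{dp4t3} both to $(\vartheta^{\varepsilon})$ and to the bounded sequence $(\varepsilon\nabla\vartheta^{\varepsilon})$ and, after passing to a common subsequence, get $\vartheta\in L^{2}(\Omega\times Y)$ and $\xi\in L^{2}(\Omega\times Y)^{d}$ with $\vartheta^{\varepsilon}\overset{2-s}{\rightharpoonup}\vartheta$ and $\varepsilon\nabla\vartheta^{\varepsilon}\overset{2-s}{\rightharpoonup}\xi$. Testing against $\psi\in\mathcal{D}(\Omega;\mathcal{C}_{\#}^{\infty}(Y))^{d}$ and integrating over $\Omega$ the identity $\varepsilon\,\mathrm{div}_{x}\!\left(\vartheta^{\varepsilon}\psi(x,x/\varepsilon)\right)=\varepsilon\nabla\vartheta^{\varepsilon}\cdot\psi+\varepsilon\vartheta^{\varepsilon}(\mathrm{div}_{x}\psi)+\vartheta^{\varepsilon}(\mathrm{div}_{y}\psi)$ (the total-divergence term dropping since $\psi$ has compact $x$-support), I pass to the limit to obtain $\int_{\Omega\times Y}\xi\cdot\psi=-\int_{\Omega\times Y}\vartheta\,\mathrm{div}_{y}\psi$ for all such $\psi$. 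As the test fields are $Y$-periodic, this identifies $\xi=\nabla_{y}\vartheta$ weakly with no flux on $\partial Y$, hence $\vartheta\in L^{2}(\Omega;H_{\#}^{1}(Y))$ and $\varepsilon\nabla\vartheta^{\varepsilon}\overset{2-s}{\rightharpoonup}\nabla_{y}\vartheta$. This part is routine.

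\textbf{Step 2 (the interfacial limit).} Fix $\varphi\in\mathcal{D}(\Omega;\mathcal{C}_{\#}^{\infty}(Y))$. Since $\Omega_{2}^{\varepsilon}$ is strictly embedded in $\Omega_{1}^{\varepsilon}$, the inclusion cell $Y_{2}$ is compactly contained in $Y$ with $\partial Y_{2}=\Sigma$; I extend the datum $y\mapsto\varphi(x,y)\nu(y)$ off $\Sigma$ to a $Y$-periodic field $\Psi(x,\cdot)\in\mathcal{C}_{\#}^{\infty}(Y)^{d}$, smooth and with compact $x$-support in $\Omega$, such that $\Psi\cdot\nu=\varphi$ on $\Sigma$, where $\nu$ points out of $Y_{1}$ (so $-\nu(x/\varepsilon)$ is the outward normal of $\Omega_{2}^{\varepsilon}$ along $\Sigma^{\varepsilon}$). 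Then the divergence theorem on $\Omega_{2}^{\varepsilon}$ applied to $\vartheta^{\varepsilon}\Psi(x,x/\varepsilon)$ gives
\[
\varepsilon\int_{\Sigma^{\varepsilon}}\vartheta^{\varepsilon}\,\varphi(x,x/\varepsilon)\,\mathrm{d}\sigma^{\varepsilon}=-\int_{\Omega_{2}^{\varepsilon}}\Big(\varepsilon\nabla\vartheta^{\varepsilon}\cdot\Psi+\varepsilon\vartheta^{\varepsilon}\,\mathrm{div}_{x}\Psi+\vartheta^{\varepsilon}\,\mathrm{div}_{y}\Psi\Big)(x,x/\varepsilon)\,\mathrm{d}x .
\]
Writing $\int_{\Omega_{2}^{\varepsilon}}=\int_{\Omega}\chi_{2}(x/\varepsilon)\,(\cdots)$, the middle term is $O(\varepsilon)$ by the $L^{2}$ bound, while the two-scale limits of Step~1 send the first and third terms to $\int_{\Omega}\int_{Y_{2}}\nabla_{y}\vartheta\cdot\Psi\,\mathrm{d}y\,\mathrm{d}x$ and $\int_{\Omega}\int_{Y_{2}}\vartheta\,\mathrm{div}_{y}\Psi\,\mathrm{d}y\,\mathrm{d}x$; adding them gives $-\int_{\Omega}\int_{Y_{2}}\mathrm{div}_{y}(\vartheta\Psi)\,\mathrm{d}y\,\mathrm{d}x$, and a final divergence theorem on $Y_{2}$ (outward normal $-\nu$, $\Psi\cdot\nu=\varphi$) turns this into $\int_{\Omega\times\Sigma}\vartheta\,\varphi\,\mathrm{d}x\,\mathrm{d}\sigma$, which is the claimed identity for smooth $\varphi$; the general case $\varphi\in\mathcal{D}(\Omega;\mathcal{C}_{\#}(Y))$ then follows by density, both sides being continuous in $\varphi$ for the sup-norm by the bounds recorded above.

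\textbf{Expected main obstacle.} The substance is all in Step~2: producing a $Y$-periodic, $x$-smooth extension $\Psi$ of the normal datum with suitable support and keeping the orientation of $\nu^{\varepsilon}$ straight, and --- above all --- justifying that the discontinuous weights $\chi_{2}(y)\Psi(x,y)$ and $\chi_{2}(y)\,\mathrm{div}_{y}\Psi(x,y)$ are admissible test functions in the passage to the two-scale limit over $\Omega_{2}^{\varepsilon}$. This is legitimate because $|\partial Y_{2}|=0$ and $\chi_{2}(\cdot/\varepsilon)$ two-scale converges to $\chi_{2}$ (see \cite{all} and \cite{adh}), but it is the one point that has to be handled with care; once it is granted, the remaining manipulations are elementary.
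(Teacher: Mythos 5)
Your argument is correct in substance, but note first that the paper itself offers no proof of this statement: Theorem \ref{dp4t5} is simply quoted, as ``Proposition 2.6'', from G. Allaire, A. Damlamian and U. Hornung \cite{adh}, so any self-contained argument is necessarily a different route from the paper's. What you supply is essentially the standard proof underlying that reference: the interior limits by two-scale compactness together with the usual integration-by-parts identification of the limit of $\varepsilon \nabla \vartheta ^{\varepsilon }$ as $\nabla _{y}\vartheta $, and the surface limit obtained by converting $\varepsilon \int_{\Sigma ^{\varepsilon }}$ into a volume integral over $\Omega _{2}^{\varepsilon }$ by means of a $Y$-periodic vector field $\Psi $ with $\Psi \cdot \nu =\varphi $ on $\Sigma $. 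Two remarks. First, you are right that the statement as printed is missing the normalising factor $\varepsilon $ in front of the surface integral (the same omission recurs in Remark \ref{dp4r3} and in the surface limits used in the proof of Theorem \ref{dp4t2}); since $\left\vert \Sigma ^{\varepsilon }\right\vert =O\left( \varepsilon ^{-1}\right) $ the unnormalised integral generically diverges, and your reading is the one consistent with \cite{adh} and with the way the paper itself weights interfacial terms, e.g. in the norm of $V^{\varepsilon }$ and in Lemma \ref{dp4l3}. Second, your parenthetical claim that $Y_{2}$ is compactly contained in $Y$ does not actually follow from the paper's hypotheses --- only $\overline{\Omega _{2}^{\varepsilon }}\subset \Omega $ is assumed, and the periodic extension of $Y_{2}$ is allowed to meet $\partial Y$ --- but this is harmless: since $\Psi $ and $\vartheta \left( x,\cdot \right) $ are $Y$-periodic, the contributions of $\partial Y_{2}\cap \partial Y$ to your final divergence theorem cancel, and $\partial \Omega _{2}^{\varepsilon }=\Sigma ^{\varepsilon }$ in any case, so both applications of the divergence theorem go through. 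The one genuinely delicate point, the admissibility of the discontinuous test functions $\chi _{2}\left( y\right) \Psi \left( x,y\right) $ and $\chi _{2}\left( y\right) \mathrm{div}_{y}\Psi \left( x,y\right) $ in the two-scale limit, you flag correctly, and it is covered by the standard admissibility results of \cite{all}.
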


The notion of two-scale convergence can easily be extended to time-dependent
sequences without affecting the results stated above, namely Theorems \ref%
{dp4t3}, \ref{dp4t4} and \ref{dp4t5}. According to G.W. Clark and R.
Showalter\cite{cs}, we give the following:

\begin{definition}
\label{dp4d3}We say that a sequence $\left( \vartheta ^{\varepsilon }\right)
$ in $L^{2}\left( Q\right) $ two-scale converges to $\vartheta \in
L^{2}\left( Q\times Y\right) $ (we write $\vartheta ^{\varepsilon }\overset{%
2-s}{\rightharpoonup }\vartheta $) if, for any test function $\varphi \in
L^{2}\left( Q;\mathcal{C}_{\#}(Y)\right) $, we have%
\begin{equation*}
\underset{\varepsilon \rightarrow 0}{\lim }\int_{Q}\vartheta ^{\varepsilon
}\left( t,x\right) \varphi \left( t,x,\dfrac{x}{\varepsilon }\right) \hspace{%
0.01in}\mathrm{d}t\mathrm{d}x=\int_{Q\times Y}\vartheta \left( t,x,y\right)
\varphi \left( t,x,y\right) \hspace{0.01in}\mathrm{d}t\mathrm{d}x\mathrm{d}y.
\end{equation*}

\
\end{definition}

\begin{remark}
\label{dp4r3}If $\left( \vartheta ^{\varepsilon }\right) $ is a uniformly
bounded sequence in $L^{2}\left( Q\right) $, then there exists $\vartheta
\in L^{2}\left( Q\right) $ such that, up to a subsequence, $\vartheta
^{\varepsilon }\overset{2-s}{\rightharpoonup }\vartheta $ in the sense of
Def. \ref{dp4d3}. Moreover, if $\left( \vartheta ^{\varepsilon }\right) $ is
uniformly bounded in $L^{2}\left( 0,T;H^{1}\left( \Omega \right) \right) $,
then up to a subsequence, there exist $\vartheta \in L^{2}\left(
0,T;H^{1}\left( \Omega \right) \right) $ and $\vartheta _{0}\in L^{2}\left(
Q;H_{\#}^{1}\left( Y\right) /\mathbb{R}\right) $ such that $\vartheta
^{\varepsilon }\overset{2-s}{\rightharpoonup }\vartheta $ and $\nabla
\vartheta ^{\varepsilon }\overset{2-s}{\rightharpoonup }\nabla \vartheta
+\nabla _{y}\vartheta _{0}$. On the other hand, if a sequence $\left(
\vartheta ^{\varepsilon }\right) $ is such that
\begin{equation*}
\left\Vert \vartheta ^{\varepsilon }\right\Vert _{L^{2}\left( Q\right)
}+\varepsilon \left\Vert \nabla \vartheta ^{\varepsilon }\right\Vert
_{L^{2}\left( Q\right) }\leq C,
\end{equation*}%
then, up to a subsequence, there exists $\vartheta \in L^{2}\left(
0,T;H_{\#}^{1}\left( Y\right) \right) $ such that $\vartheta ^{\varepsilon }%
\overset{2-s}{\rightharpoonup }\vartheta $ and $\varepsilon \nabla \vartheta
^{\varepsilon }\overset{2-s}{\rightharpoonup }\nabla _{y}\vartheta $.
Furthermore, for every $\varphi \in \mathcal{D}\left( Q;\mathcal{C}%
_{\#}(Y)\right) $ we have:%
\begin{equation*}
\lim_{\varepsilon \rightarrow 0}\int_{S^{\varepsilon }}\vartheta
^{\varepsilon }\left( t,x\right) \varphi \left( t,x,\frac{x}{\varepsilon }%
\right) \hspace{0.01in}\mathrm{d}\sigma ^{\varepsilon }=\int_{Q\times \Sigma
}\vartheta \left( t,x,y\right) \varphi \left( t,x,y\right) \hspace{0.01in}%
\mathrm{d}x\mathrm{d}s
\end{equation*}%
where $\mathrm{d}s$ denotes the surface measure on $\Sigma $.
\end{remark}

Next we focus our attention on the two-scale convergence process, that is
deriving the two-scale homogenized system by employing the above compacity
results: Theorems \ref{dp4t3}-\ref{dp4t5} and Remark \ref{dp4r3}. To do
this, let us choose the following test functions: Let
\begin{equation*}
\varphi _{1}\in W^{1,2}\left( 0,T;\mathcal{D}\left( \Omega \right) \right)
,\ \psi \in W^{1,2}\left( 0,T;\mathcal{D}\left( \Omega ;\mathcal{C}%
_{\#}(Y)\right) \right)
\end{equation*}
and
\begin{equation*}
\varphi _{2}\in W^{1,2}\left( 0,T;\mathcal{D}\left( \Omega ;\mathcal{C}%
_{\#}(Y)\right) \right)
\end{equation*}
with
\begin{equation*}
\varphi _{1}\left( T,\cdot ,\cdot \right) =\varphi _{2}\left( T,\cdot ,\cdot
\right) =0\text{.}
\end{equation*}%
Set
\begin{equation*}
\varphi ^{\varepsilon }\left( t,x\right) =\left( \varphi _{1}\left(
t,x\right) +\varepsilon \psi \left( t,x,\frac{x}{\varepsilon }\right)
,\varphi _{2}\left( t,x,\frac{x}{\varepsilon }\right) \right) .
\end{equation*}%
Taking $\varphi =\varphi ^{\varepsilon }$ as a test function in (\ref{dp4ie}%
), we get
\begin{eqnarray}
&&-\int_{Q_{1}^{\varepsilon }}T^{\varepsilon }\partial _{t}\varphi
_{1}-\int_{Q_{2}^{\varepsilon }}T_{b}^{\varepsilon }\partial _{t}\varphi
_{2}+\int_{Q_{1}^{\varepsilon }}\alpha \nabla T^{\varepsilon }\left( \nabla
\varphi _{1}+\left( \nabla _{y}\psi \right) ^{\varepsilon }\right) +  \notag
\\
&&\varepsilon \int_{Q_{2}^{\varepsilon }}\alpha _{b}\nabla
T_{b}^{\varepsilon }\left( \nabla _{y}\varphi _{2}\right) ^{\varepsilon
}+\varepsilon \int_{S^{\varepsilon }}\gamma \left( T^{\varepsilon
}-T_{b}^{\varepsilon }\right) \left( \varphi _{1}-\varphi _{2}\right)  \notag
\\
&=&\int_{Q_{1}^{\varepsilon }}f\varphi _{1}+\int_{Q_{2}^{\varepsilon
}}f_{b}\varphi _{2}+\int_{\Omega _{1}^{\varepsilon }}h\varphi _{1}\left(
0\right) +\int_{\Omega _{2}^{\varepsilon }}h_{b}\varphi _{2}\left( 0\right)
+\varepsilon K^{\varepsilon }  \label{fv2}
\end{eqnarray}%
where%
\begin{eqnarray*}
K^{\varepsilon } &=&O\left( \varepsilon \right) =\int_{Q_{1}^{\varepsilon
}}T^{\varepsilon }\partial _{t}\psi \hspace{0.01in}\mathrm{d}t\mathrm{d}%
x-\int_{Q_{1}^{\varepsilon }}\alpha \nabla T^{\varepsilon }\left( \nabla
_{x}\psi \right) ^{\varepsilon }\hspace{0.01in}\mathrm{d}t\mathrm{d}x \\
&&+\int_{Q_{1}^{\varepsilon }}f\psi \hspace{0.01in}\mathrm{d}t\mathrm{d}%
x+\int_{\Omega _{1}^{\varepsilon }}h\psi \left( 0\right) \hspace{0.01in}%
\mathrm{d}x+\varepsilon \int_{S^{\varepsilon }}\gamma \left( T^{\varepsilon
}-T_{b}^{\varepsilon }\right) \psi \hspace{0.01in}\mathrm{d}t\mathrm{d}%
\sigma ^{\varepsilon }.
\end{eqnarray*}%
Now, thanks to the assumptions (\ref{dp4h7}) and to the a priori estimates (%
\ref{dp4ea}), using Theorems \ref{dp4t3}-\ref{dp4t5} and Remark \ref{dp4r3},
we have up to a subsequence, the following two scale convergences:

\begin{gather*}
\chi _{1}T^{\varepsilon }\overset{2-s}{\rightharpoonup }\chi _{1}T,\ \ \chi
_{2}T_{b}^{\varepsilon }\overset{2-s}{\rightharpoonup }\chi _{2}T_{b}, \\
\ \ \chi _{1}\nabla T^{\varepsilon }\overset{2-s}{\rightharpoonup }\chi
_{1}\left( \nabla T+\nabla _{y}T_{1}\right) ,\ \varepsilon \chi _{2}\nabla
T_{b}^{\varepsilon }\overset{2-s}{\rightharpoonup }\chi _{2}\nabla _{y}T_{b},
\\
\lim_{\varepsilon \rightarrow 0}\int_{S^{\varepsilon }}T^{\varepsilon
}\varphi \left( t,x,\frac{x}{\varepsilon }\right) \hspace{0.01in}\mathrm{d}%
\sigma ^{\varepsilon }=\int_{Q\times \Sigma }T\varphi \left( t,x,y\right)
\hspace{0.01in}\mathrm{d}x\mathrm{d}s, \\
\lim_{\varepsilon \rightarrow 0}\int_{S^{\varepsilon }}T_{b}^{\varepsilon
}\varphi \left( t,x,\frac{x}{\varepsilon }\right) \hspace{0.01in}\mathrm{d}%
\sigma ^{\varepsilon }=\int_{Q\times \Sigma }T_{b}\varphi \left(
t,x,y\right) \hspace{0.01in}\mathrm{d}x\mathrm{d}s,
\end{gather*}%
where $T\in L^{2}\left( 0,T;H_{0}^{1}\left( \Omega \right) \right) ,\
T_{b}\in L^{2}\left( Q;H_{\#}^{1}\left( Y\right) \right) $ and $T_{1}\in
L^{2}\left( Q;H_{\#}^{1}\left( Y\right) /\mathbb{R}\right) $. Now, passing
to the limit in (\ref{fv2}) and taking into account the above limits yield
the two scale system:%
\begin{gather}
-\int_{Q\times Y_{1}}T\partial _{t}\varphi _{1}-\int_{Q\times
Y_{2}}T_{b}\partial _{t}\varphi _{2}+\int_{Q\times Y_{1}}\alpha \left(
\nabla T+\nabla _{y}T_{1}\right) \left( \nabla \varphi _{1}+\nabla _{y}\psi
\right)  \notag \\
+\int_{Q\times Y_{2}}\alpha _{b}\nabla _{y}T_{b}\nabla _{y}\varphi
_{2}+\int_{Q\times \Sigma }\gamma \left( T-T_{b}\right) \left( \varphi
_{1}-\varphi _{2}\right)  \notag \\
=\int_{Q\times Y_{1}}f\varphi _{1}+\int_{Q\times Y_{2}}f_{b}\varphi
_{2}+\int_{\Omega \times Y_{1}}h\varphi _{1}+\int_{\Omega \times
Y_{2}}h_{b}\varphi _{2}  \label{fv3}
\end{gather}%
Now, integration by parts in (\ref{fv3}) yields
\begin{gather}
\left\vert Y_{1}\right\vert \partial _{t}T-\alpha \text{\textrm{div}}\left(
\int_{Y_{1}}\left( \nabla T+\nabla _{y}T_{1}\right) \right) +\int_{\Sigma
}\gamma \left( T-T_{b}\right) =\left\vert Y_{1}\right\vert f\text{ in }Q%
\text{;}  \notag \\
\   \label{53} \\
\partial _{t}T_{b}-\alpha _{b}\Delta _{y}T_{b}=\left\vert Y_{2}\right\vert
f_{b}\text{ in }Q\times Y_{2}\text{;}  \label{54} \\
-\alpha \text{\textrm{div}}_{y}\left( \nabla T+\nabla _{y}T_{1}\right) =0%
\text{ in }Q\times Y_{2}\text{;}  \label{55} \\
\alpha \left( \nabla T+\nabla _{y}T_{1}\right) \cdot \nu =0\text{ on }%
Q\times \Sigma \text{;}  \label{56} \\
\alpha _{b}\nabla _{y}T_{b}\cdot \nu =\gamma \left( T-T_{b}\right) \text{ on
}Q\times \Sigma \text{;}  \label{57} \\
y\longmapsto T_{1}\text{ }Y-\text{periodic;}  \label{58} \\
y\longmapsto T_{b}\text{ }Y-\text{periodic;}  \label{59} \\
T\left( 0\right) =\left\vert Y_{1}\right\vert h\text{ in }Q\text{;}
\label{510} \\
T\left( 0\right) =\chi _{2}h_{b}\text{ in }Q\times Y_{2}\text{.}  \label{511}
\\
T=0\text{ on }\Gamma  \label{512}
\end{gather}%
Finally we observe that the equations of the system (\ref{53})-(\ref{512})
are exactly and respectively (\ref{5a}), (\ref{4n}), (\ref{fv4}), (\ref{fv5}%
), (\ref{hs2}), (\ref{fv6}), (\ref{4u}), (\ref{4s}), (\ref{4t}) and (\ref{b1}%
). Therefore we have recovered the same process done in the previous section
and consequently the formal asymptotic expansion method used to construct
the homogenized problem (\ref{h0})-(\ref{h2}) is justified. Thus Theorem \ref%
{dp4t2} is proved.

\end{document}